\newcommand\redcircle[1]{\filldraw[fill=white, draw=red] #1 circle (2pt)}
\newcommand\bluedot[1]{\filldraw[blue] #1 circle (2pt)}
\newcommand\dotlabel[1]{$\scriptstyle{#1}$}
\newcommand\regionlabel[1]{$\scriptstyle{#1}$}
\newcommand\tokenlabel[1]{$\scriptstyle{#1}$}
\tikzset{anchorbase/.style={>=To,baseline={([yshift=-0.5ex]current bounding box.center)}}}
\tikzset{wipe/.style={white,line width=4pt}}
\newcommand\rcup{
  \begin{tikzpicture}[anchorbase]
    \draw[->] (0,0.1) to (0,0) arc(180:360:0.2) to (0.4,0.1);
  \end{tikzpicture}
}
\newcommand\lcup{
  \begin{tikzpicture}[anchorbase]
    \draw[<-] (0,0.1) to (0,0) arc(180:360:0.2) to (0.4,0.1);
  \end{tikzpicture}
}
\newcommand\rcap{
  \begin{tikzpicture}[anchorbase]
    \draw[->] (0,-0.1) to (0,0) arc(180:0:0.2) to (0.4,-0.1);
  \end{tikzpicture}
}
\tikzset{->-/.style={decoration={
  markings,
  mark=at position #1 with {\arrow{>}}},postaction={decorate}}}
\tikzset{-<-/.style={decoration={
  markings,
  mark=at position #1 with {\arrow{<}}},postaction={decorate}}}
\crefname{defin}{Definition}{Definitions}
\crefname{eg}{Example}{Examples}
\crefname{lem}{Lemma}{Lemmas}
\crefname{theo}{Theorem}{Theorems}
\crefname{equation}{}{}
\crefname{enumi}{}{}
\newcommand\C{\mathbb{C}}
\newcommand\Z{\mathbb{Z}}
\newcommand\N{\mathbb{N}}
\newcommand\kk{\Bbbk}
\newcommand\AH{\mathcal{AH}}
\newcommand\AW{\mathcal{AW}}
\newcommand\cC{\mathcal{C}}
\newcommand\cB{\mathcal{B}}
\newcommand\cH{\mathcal{H}}
\newcommand\Heis{\mathcal{H}\mathit{eis}}
\newcommand\cS{\mathcal{S}}
\newcommand\cW{\mathcal{W}}
\newcommand\one{\mathbb{1}}
\newcommand\dg{\mathrm{deg}}
\newcommand\Id{\mathrm{Id}}
\newcommand\tr{\mathrm{tr}}
\newcommand\Vect{\mathrm{Vect}}
\newcommand\fgl{\mathfrak{gl}}
\newcommand\fh{\mathfrak{h}}
\newcommand\md{\textup{-mod}}
\def\chk#1{#1^{\smash{\scalebox{.8}[1.4]{\rotatebox{90}{\textnormal{\guilsinglleft}}}}}} 
\DeclareMathOperator{\Add}{Add}
\DeclareMathOperator{\End}{End}
\DeclareMathOperator{\Hom}{Hom}
\DeclareMathOperator{\Iso}{Iso}
\DeclareMathOperator{\Kar}{Kar}     
\DeclareMathOperator{\Span}{Span}
\DeclareMathOperator{\Sym}{Sym}
\DeclareMathOperator{\Tr}{Tr}
\newtheorem{theo}{Theorem}[section]
\newtheorem{prop}[theo]{Proposition}
\newtheorem{lem}[theo]{Lemma}
\newtheorem*{lem*}{Lemma}
\theoremstyle{definition}
\newtheorem{defin}[theo]{Definition}
\newtheorem{rem}[theo]{Remark}
\newtheorem{eg}[theo]{Example}
\numberwithin{equation}{section}
  \newcommand{\comments}[1]{
    \ \\
    {\color{red}
      \textbf{AS:} #1
    }
    \\
    }
  \newcommand{\comments}[1]{}
  \newcommand{\details}[1]{
      \ \\
      {\color{OliveGreen}
        \textbf{Details:} #1
      }
      \\
  }
  \newcommand{\details}[1]{}
\begin{document}
%

\title{String diagrams and categorification}

\author{Alistair Savage}
\address{
  Department of Mathematics and Statistics \\
  University of Ottawa
}
\urladdr{\href{https://alistairsavage.ca}{alistairsavage.ca}, \textrm{\textit{ORCiD}:} \href{https://orcid.org/0000-0002-2859-0239}{orcid.org/0000-0002-2859-0239}}
\email{alistair.savage@uottawa.ca}
\thanks{This work was supported by Discovery Grant RGPIN-2017-03854 from the Natural Sciences and Engineering Research Council of Canada.}

\begin{abstract}
  These are lectures notes for a mini-course given at the conference \emph{Interactions of Quantum Affine Algebras with Cluster Algebras, Current Algebras, and Categorification} in June 2018.  The goal is to introduce the reader to string diagram techniques for monoidal categories, with an emphasis on their role in categorification.
\end{abstract}

\subjclass[2010]{18D10}
\dedicatory{Dedicated to Vyjayanthi Chari on the occasion of her 60th birthday}
\keywords{Monoidal category, string diagram, pivotal category, categorification}

\maketitle
\thispagestyle{empty}

\tableofcontents

\section{Introduction}

Categorification is rapidly becoming a fundamental concept in many areas of mathematics, including representation theory, topology, algebraic combinatorics, and mathematical physics.  One of the principal ingredients in categorification is the notion of a monoidal category.  The goal of these notes is to introduce the reader to these categories as they often appear in categorification.  Our intention is to motivate the definitions as much as possible, to help the reader build an intuitive understanding of the underlying concepts.

We begin in \cref{sec:defs} with the definition of a strict $\kk$-linear monoidal category.  Our treatment will center around the string diagram calculus for such categories.  The importance of this formalism comes from both the geometric intuition it provides and the fact that string diagrams are the framework upon which the applications of categorification to other areas such as knot theory and topology are built.

In \cref{sec:monalg} give a number of examples of strict $\kk$-linear monoidal categories.  Most mathematicians encounter monoidal categories as additional structure on some concept they already study: sets, vector spaces, group representations, etc.\ all naturally form monoidal categories.  However, we will define abstract monoidal categories via generators and relations.  Even though this idea has been around for some time, it is still somewhat foreign to many mathematicians working outside of category theory.  We will see how, using this approach, one can obtain extremely efficient descriptions of familiar objects such as symmetric groups, degenerate affine Hecke algebras, braid groups, and wreath product algebras.

The formalism of string diagrams is at its best when one has a \emph{pivotal category}, and we turn to this concept in \cref{sec:piv}.  We start by discussing dual objects in monoidal categories.  Pivotal categories are categories in which all objects have duals, the duality data is compatible with the tensor product, and the right and left mates of morphisms are equal.  In pivotal categories, isotopic string diagrams correspond to the same morphism, allowing for intuitive topological arguments, as well as deep connections to topology and knot theory.

In \cref{sec:cat}, we discuss the idea of categorification, beginning with what is perhaps the standard approach, involving the Grothendieck group/ring of an additive category.  We then discuss the trace of a category, and how this gives rise to a second type of categorification.  We also define the Chern character map, which relates the Grothendieck group to the trace, and the notion of idempotent completion, motivated by the concept of a projective module.

We conclude, in \cref{sec:Heiscat}, with the example of Heisenberg categories.  We define these categories using the ideas we have developed, and explain their relationship to the Heisenberg algebra.  Our discussion here is necessarily brief, aiming only to give the reader a taste of a current area of research.  We point the interested reader to references for further reading.

\section{Strict monoidal categories and string diagrams\label{sec:defs}}

\subsection{Definitions}

Throughout this document, all categories are assumed to be locally small.  In other words, we have a \emph{set} of morphisms between any two objects.

A \emph{strict monoidal category} is a category $\cC$ equipped with
\begin{itemize}
  \item a bifunctor (the \emph{tensor product}) $\otimes \colon \cC \times \cC \to \cC$ and
  \item a \emph{unit object} $\one$
\end{itemize}
such that, for all objects $X$, $Y$, and $Z$ of $\cC$, we have
\begin{itemize}
  \item $(X \otimes Y) \otimes Z = X \otimes (Y \otimes Z)$ and
  \item $\one \otimes X = X = X \otimes \one$,
\end{itemize}
and, for all morphisms $f$, $g$, and $h$ of $\cC$, we have
\begin{itemize}
  \item $(f \otimes g) \otimes h = f \otimes (g \otimes h)$ and
  \item $1_\one \otimes f = f = f \otimes 1_\one$.
\end{itemize}
Here, and throughout the document, $1_X$ denotes the identity endomorphism of an object $X$.

\begin{rem} \label{strictify}
  Note that, in a (not necessarily strict) \emph{monoidal category}, the equalities above are replaced by isomorphism, and one imposes certain coherence conditions.  For example, suppose $\kk$ is a field, and let $\Vect_\kk$ be the category of finite-dimensional $\kk$-vector spaces.  In this category one has isomorphisms $(U \otimes V) \otimes W \cong U \otimes (V \otimes W)$, but these isomorphisms are not equalities in general.  Similarly, the unit object in this category is the one-dimensional vector space $\kk$, and we have $\kk \otimes V \cong V \cong V \otimes \kk$ for any vector space $V$.

  We will be building monoidal categories ``from scratch'' via generators and relations.  Thus, we are free to require them to be strict.  In general, Mac Lane's coherence theorem for monoidal categories asserts that every monoidal category is monoidally equivalent to a strict one.  (For a proof of this fact, see \cite[\S VII.2]{Mac98} or \cite[\S XI.5]{Kas95}.)  So, in practice, we don't lose much by assuming monoidal categories are strict.  (See also \cite{Sch01}.)
\end{rem}

Fix a commutative ground ring $\kk$.  A \emph{$\kk$-linear category} is a category $\cC$ such that
\begin{itemize}
  \item for any two objects $X$ and $Y$ of $\cC$, the hom-set $\Hom_\cC(X,Y)$ is a $\kk$-module,
  \item composition of morphisms is bilinear:
    \begin{gather*}
      f \circ (\alpha g + \beta h) = \alpha (f \circ g) + \beta (f \circ h), \\
      (\alpha f + \beta g) \circ h = \alpha (f \circ h) + \beta (g \circ h),
    \end{gather*}
    for all $\alpha,\beta \in \kk$ and morphisms $f$, $g$, and $h$ such that the above operations are defined.
\end{itemize}
The category of $\kk$-modules is an example of a $\kk$-linear category.  For any two $\kk$-modules $M$ and $N$, the space $\Hom_\kk(M,N)$ is again a $\kk$-module under the usual pointwise operations.  Composition is bilinear with respect to this $\kk$-module structure.

A \emph{strict $\kk$-linear monoidal category} is a category that is both strict monoidal and $\kk$-linear, and such that the tensor product of morphisms is $\kk$-bilinear.  Before discussing some examples, we mention the important \emph{interchange law}.  Suppose
\[
  X_1 \xrightarrow{f} X_2
  \qquad \text{and} \qquad
  Y_1 \xrightarrow{g} Y_2
\]
are morphisms in a strict $\kk$-linear monoidal category $\cC$.  Then
\[
  (1_{X_2} \otimes g) \circ (f \otimes 1_{Y_1})
  = \otimes ((1_{X_2},g)) \circ \otimes ((f,1_{Y_1}))
  = \otimes ((1_{X_2},g) \circ (f,1_{Y_1}))
  = \otimes ((f,g))
  = f \otimes g,
\]
where the second equality uses that the tensor product is a bifunctor.  Similarly,
\[
  (f \otimes 1_{Y_2}) \circ (1_{X_1} \circ g) = f \otimes g.
\]
Thus, the following diagram commutes:
\[
  \xymatrix{
    X_1 \otimes Y_1 \ar[r]^{1 \otimes g} \ar[d]_{f \otimes 1} \ar[dr]^{f \otimes g} & X_1 \otimes Y_2 \ar[d]^{f \otimes 1} \\
    X_2 \otimes Y_1 \ar[r]_{1 \otimes g} & X_2 \otimes Y_2
  }
\]

\subsection{Examples\label{centereg}}

Let's consider a very simple strict monoidal category.   Every monoidal category must have a unit object $\one$ by definition.  But it is possible that this is the \emph{only} object.  The identity axiom for a strict monoidal category forces $\one \otimes \one = \one$.  There is only one hom-set in this category, namely
\[
  \End(\one) := \Hom(\one,\one).
\]
The associativity axiom for morphisms in a category implies that $\End(\one)$ is a monoid under composition, with identity $1_\one$, the identity endomorphism of $\one$.  The axioms of a strict monoidal category imply that $\End(\one)$ is also a monoid under the tensor product.  However, the interchange law forces these monoids to coincide, and to be commutative!  Indeed, for all $f,g \in \End(\one)$, we have
\begin{equation} \label{swirl}
  f \circ g
  = (f \otimes 1_\one) \circ(1_\one \otimes g)
  = f \otimes g
  = (1_\one \otimes g) \circ (f \otimes 1_\one)
  = g \circ f.
\end{equation}
Conversely, given any commutative monoid $A$, we have a strict monoidal category with one object $\one$, and $\End(\one) = A$.  The composition and tensor product are both given by the multiplication in $A$.

Now consider a strict \emph{$\kk$-linear} monoidal category with one object $\one$.  Then $\End(\one)$ is an associative $\kk$-algebra, and an argument exactly analogous to the one above shows that it is, in fact, commutative.  Conversely, every commutative associative $\kk$-algebra gives rise to a one-object strict $\kk$-linear monoidal category.

Note that the above discussion actually shows that $\End(\one)$ is a commutative monoid in \emph{any} strict monoidal category and is a commutative $\kk$-algebra in \emph{any} strict $\kk$-linear monoidal category.  The monoid/algebra $\End(\one)$ is called the \emph{center} of the category.

\begin{eg}[Center of $\Vect_\kk$] \label{centerVect}
  Suppose $\kk$ is a field and consider the category $\Vect_\kk$ of finite-dimensional $\kk$-vector spaces.  This is not a \emph{strict} monoidal category, but, as noted in \cref{strictify} (see, in particular, \cite[Th.~4.3]{Sch01}), we can safely avoid this technicality.  The unit object of $\Vect_\kk$ is the one-dimensional vector space $\kk$, and so the center of this category is $\End_\kk (\kk)$, which is canonically isomorphic, as a ring, to $\kk$ via the isomorphism
  \begin{equation} \label{Endk}
    \End_\kk(\kk) \xrightarrow{\cong} \kk,\quad
    f \mapsto f(1).
  \end{equation}
\end{eg}

\subsection{String diagrams}

Strict monoidal categories are especially well suited to being depicted using the language of \emph{string diagrams}.  These diagrams, which are sometimes also called \emph{Penrose diagrams}, have their origins in work of Roger Penrose in physics \cite{Pen71}.  Working with strings diagrams helps build intuition.  It also often makes certain arguments obvious, whereas the corresponding algebraic proof can be a bit opaque.  We give here a brief overview of string diagrams, referring the reader to \cite[Ch.~2]{TV17} for a detailed treatment.  Throughout this section, $\cC$ will denote a strict $\kk$-linear monoidal category.

We will denote a morphism $f \colon X \to Y$ by a strand with a coupon labeled $f$:
\[
  \begin{tikzpicture}[anchorbase]
    \draw (0,0) node[anchor=north] {\regionlabel{X}} to (0,1) node[anchor=south] {\regionlabel{Y}};
    \filldraw[black,fill=white] (0,0.7) arc(90:450:0.2);
    \node at (0,0.5) {\tokenlabel{f}};
  \end{tikzpicture}
\]
Note that we are adopting the convention that diagrams should be read from bottom to top.  The \emph{identity map} $1_X \colon X \to X$ is a string with no coupon:
\[
  \begin{tikzpicture}[anchorbase]
    \draw (0,0) node[anchor=north] {\regionlabel{X}} to (0,1) node[anchor=south] {\regionlabel{X}};
  \end{tikzpicture}
\]
We sometimes omit the object labels (e.g.\ $X$ and $Y$ above) when they are clear or unimportant.  We will also sometimes distinguish identity maps of different objects by some sort of decoration of the string (orientation, dashed versus solid, etc.), rather than by adding object labels.

Composition is denoted by \emph{vertical stacking} (recall that we read pictures bottom to top) and tensor product is \emph{horizontal juxtaposition}:
\[
  \begin{tikzpicture}[anchorbase]
    \draw (0,0) to (0,1.4);
    \filldraw[black,fill=white] (0,1.2) arc(90:450:0.2);
    \node at (0,1) {\tokenlabel{f}};
    \filldraw[black,fill=white] (0,0.6) arc(90:450:0.2);
    \node at (0,0.4) {\tokenlabel{g}};
  \end{tikzpicture}
  \ =\
  \begin{tikzpicture}[anchorbase]
    \draw (0,0) to (0,1.4);
    \filldraw[black,fill=white] (0,1) arc(90:450:0.3);
    \node at (0,0.7) {\tokenlabel{f \circ g}};
  \end{tikzpicture}
  \qquad \text{and} \qquad
  \begin{tikzpicture}[anchorbase]
    \draw (0,0) to (0,1);
    \filldraw[black,fill=white] (0,0.7) arc(90:450:0.2);
    \node at (0,0.5) {\tokenlabel{f}};
  \end{tikzpicture}
  \ \otimes \
  \begin{tikzpicture}[anchorbase]
    \draw (0,0) to (0,1);
    \filldraw[black,fill=white] (0,0.7) arc(90:450:0.2);
    \node at (0,0.5) {\tokenlabel{g}};
  \end{tikzpicture}
  \ =\
  \begin{tikzpicture}[anchorbase]
    \draw (0,0) to (0,1);
    \filldraw[black,fill=white] (0,0.7) arc(90:450:0.2);
    \node at (0,0.5) {\tokenlabel{f}};
    \draw (0.5,0) to (0.5,1);
    \filldraw[black,fill=white] (0.5,0.7) arc(90:450:0.2);
    \node at (0.5,0.5) {\tokenlabel{g}};
  \end{tikzpicture}
  \ .
\]
The \emph{interchange law} then becomes:
\[
  \begin{tikzpicture}[anchorbase]
    \draw (0,0) to (0,1.6);
    \filldraw[black,fill=white] (0,1.3) arc(90:450:0.2);
    \node at (0,1.1) {\tokenlabel{f}};
    \draw (0.5,0) to (0.5,1.6);
    \filldraw[black,fill=white] (0.5,0.7) arc(90:450:0.2);
    \node at (0.5,0.5) {\tokenlabel{g}};
  \end{tikzpicture}
  \ =\
  \begin{tikzpicture}[anchorbase]
    \draw (0,0) to (0,1.6);
    \filldraw[black,fill=white] (0,1) arc(90:450:0.2);
    \node at (0,0.8) {\tokenlabel{f}};
    \draw (0.5,0) to (0.5,1.6);
    \filldraw[black,fill=white] (0.5,1) arc(90:450:0.2);
    \node at (0.5,0.8) {\tokenlabel{g}};
  \end{tikzpicture}
  \ =\
  \begin{tikzpicture}[anchorbase]
    \draw (0,0) to (0,1.6);
    \filldraw[black,fill=white] (0,0.7) arc(90:450:0.2);
    \node at (0,0.5) {\tokenlabel{f}};
    \draw (0.5,0) to (0.5,1.6);
    \filldraw[black,fill=white] (0.5,1.3) arc(90:450:0.2);
    \node at (0.5,1.1) {\tokenlabel{g}};
  \end{tikzpicture}
\]
A general morphism $f \colon X_1 \otimes \dotsb \otimes X_n \to Y_1 \otimes \dotsb \otimes Y_m$ can be depicted as a coupon with $n$ strands emanating from the bottom and $m$ strands emanating from the top:
\[
  \begin{tikzpicture}
    \draw (-0.5,-0.5) node[anchor=north] {\regionlabel{X_1}} to (0.5,0.5) node[anchor=south] {\regionlabel{Y_m}};
    \draw (0.5,-0.5) node[anchor=north] {\regionlabel{X_n}} to (-0.5,0.5) node[anchor=south] {\regionlabel{Y_1}};
    \filldraw[black,fill=white] (0,0.2) arc(90:450:0.2);
    \node at (0,0) {\tokenlabel{f}};
    \node at (0.05,-0.4) {$\cdots$};
    \node at (0.05,0.4) {$\cdots$};
  \end{tikzpicture}
\]

For the moment, let us denote the identity morphism $1_\one$ of the identity object $\one$ by a dashed line:
\[
  \begin{tikzpicture}[anchorbase]
    \draw[dashed] (0,0) to (0,1);
  \end{tikzpicture}
\]
Then the translation into diagrams of our argument from \cref{centereg} that the center $\End(\one)$ of the category is a commutative algebra becomes that, for all $f,g \in \End(\one)$,
\[
  \begin{tikzpicture}[anchorbase]
    \draw[dashed] (0,0) to (0,1.6);
    \filldraw[black,fill=white] (0,1.3) arc(90:450:0.2);
    \node at (0,1.1) {\tokenlabel{f}};
    \filldraw[black,fill=white] (0,0.7) arc(90:450:0.2);
    \node at (0,0.5) {\tokenlabel{g}};
  \end{tikzpicture}
  =
  \begin{tikzpicture}[anchorbase]
    \draw[dashed] (0,0) to (0,1.6);
    \filldraw[black,fill=white] (0,1.3) arc(90:450:0.2);
    \node at (0,1.1) {\tokenlabel{f}};
    \filldraw[black,fill=white] (0,0.7) arc(90:450:0.2);
    \node at (0,0.5) {\tokenlabel{g}};
    \draw[dashed] (0.5,0) to (0.5,1.6);
  \end{tikzpicture}
  \ =
  \begin{tikzpicture}[anchorbase]
    \draw[dashed] (0,0) to (0,1.6);
    \filldraw[black,fill=white] (0,1.3) arc(90:450:0.2);
    \node at (0,1.1) {\tokenlabel{f}};
    \draw[dashed] (0.5,0) to (0.5,1.6);
    \filldraw[black,fill=white] (0.5,0.7) arc(90:450:0.2);
    \node at (0.5,0.5) {\tokenlabel{g}};
  \end{tikzpicture}
  \ =\
  \begin{tikzpicture}[anchorbase]
    \draw[dashed] (0,0) to (0,1.6);
    \filldraw[black,fill=white] (0,0.7) arc(90:450:0.2);
    \node at (0,0.5) {\tokenlabel{f}};
    \draw[dashed] (0.5,0) to (0.5,1.6);
    \filldraw[black,fill=white] (0.5,1.3) arc(90:450:0.2);
    \node at (0.5,1.1) {\tokenlabel{g}};
  \end{tikzpicture}
  =
  \begin{tikzpicture}[anchorbase]
    \draw[dashed] (0,0) to (0,1.6);
    \filldraw[black,fill=white] (0,1.3) arc(90:450:0.2);
    \node at (0,1.1) {\tokenlabel{g}};
    \filldraw[black,fill=white] (0,0.7) arc(90:450:0.2);
    \node at (0,0.5) {\tokenlabel{f}};
    \draw[dashed] (0.5,0) to (0.5,1.6);
  \end{tikzpicture}
  \ =
  \begin{tikzpicture}[anchorbase]
    \draw[dashed] (0,0) to (0,1.6);
    \filldraw[black,fill=white] (0,1.3) arc(90:450:0.2);
    \node at (0,1.1) {\tokenlabel{g}};
    \filldraw[black,fill=white] (0,0.7) arc(90:450:0.2);
    \node at (0,0.5) {\tokenlabel{f}};
  \end{tikzpicture}
  \ .
\]

In fact, as we see above, the axioms of a strict ($\kk$-linear) monoidal category make it natural to omit the identity morphism $1_\one$ of the identity object.  So we draw endomorphisms $f \in \End(\one)$ of the identity as free-floating coupons:
\[
  \begin{tikzpicture}[anchorbase]
    \draw (0,0.2) arc(90:450:0.2);
    \node at (0,0) {\tokenlabel{f}};
  \end{tikzpicture}
\]
By \cref{swirl}, the horizontal and vertical juxtaposition of such free-floating coupons coincide.  So we may slide these coupons around at will.

\section{Monoidally generated algebras\label{sec:monalg}}

\subsection{Presentations}

One should think of working with strict $\kk$-linear monoidal categories as doing ``two-dimensional'' algebra with the morphisms.  Besides the addition (which corresponds to formal addition of string diagrams), we have two flavors of ``multiplication'': horizontal (the tensor product) and vertical (composition in the category).

Just as one can define associative algebras via generators and relations, one can also define strict $\kk$-linear monoidal categories in this way.  Recall that the free associative $\kk$-algebra $A$ on some set $\{a_i : i \in I\}$ of generators consists of formal finite $\kk$-linear combinations of words in the generators.  These words are of the form
\[
  a_{i_1} a_{i_2} \dotsm a_{i_n},\quad i_1,i_2,\dotsc,i_n \in I.
\]
Multiplication is given by concatenation of words, extended by linearity.  The empty word corresponds to the multiplicative identity.  If we wish to impose some set $R \subseteq A$ of relations, we then consider the algebra $A/\langle R \rangle$, where $\langle R \rangle$ is the two-sided ideal of $A$ generated by the set $R$.  What this means in practice is that we can make ``local substitutions'' in words using the relations.  For example, if $A$ is the algebra with generators $a$, $b$, $c$, and $d$, and relations $ab=c$, $d^2=ba$, then $R = \{ab-c,d^2-ba\}$, and we have
\[
  acabbcdda = ac(ab)bc(d^2)a = accbcbaa.
\]

In a similar way, we can give presentations of strict $\kk$-linear monoidal categories.  (See \cite[\S I.4.2]{Tur16} for more details.)  Now we should specify a set of generating objects, a set of generating morphisms, and some relations \emph{on morphisms} (not on objects!).  If $\{X_i : i \in I\}$ is our set of generating objects, then an arbitrary object in our category is a finite tensor product of these generating objects:
\[
  X_{i_1} \otimes X_{i_2} \otimes \dotsb \otimes X_{i_n},\quad i_1,i_2,\dotsc,i_n \in I,\ n \in \N.
\]
We think of $\one$ as being the ``empty tensor product''.  If $\{f_j : j \in J\}$ is our set of generating morphisms, then we can take arbitrary tensor products and compositions (when domains and codomains match) of these generators, e.g.
\[
  (f_{j_1} \otimes f_{j_2}) \circ \big( (f_{j_3} \circ f_{j_4}) \otimes f_{j_5} \big),\quad j_1,j_2,j_3,j_4,j_5 \in J.
\]

Working with string diagrams, our generating morphisms are diagrams, and we can vertically and horizontally compose them in any way that makes sense (i.e.\ making sure that domains and codomains match in vertical composition).  Relations allow us to make ``local changes'' in our diagrams.

In the examples to be considered below, we will often have generating objects that we will denote by $\uparrow$ and $\downarrow$.  We will always draw their identity morphisms as
\[
  \begin{tikzpicture}[anchorbase]
    \draw[->] (0,0) to (0,0.5);
  \end{tikzpicture}
  \qquad \text{and} \qquad
  \begin{tikzpicture}[anchorbase]
    \draw[<-] (0,0) to (0,0.5);
  \end{tikzpicture}
  \ ,
\]
respectively.

\subsection{The symmetric group\label{Scat}}

As a concrete example, define $\cS$ to be the strict $\kk$-linear monoidal category with:
\begin{itemize}
  \item one generating object $\uparrow$,

  \item one generating morphism
    \[
      \begin{tikzpicture}[anchorbase]
        \draw[->] (-0.25,-0.25) to (0.25,0.25);
        \draw[->] (0.25,-0.25) to (-0.25,0.25);
      \end{tikzpicture}
      \ \colon \uparrow \otimes \uparrow\ \to\ \uparrow \otimes \uparrow,
    \]

  \item two relations
    \begin{equation} \label{Sn-strings}
      \begin{tikzpicture}[anchorbase]
        \draw[->] (0.3,0) to[out=up,in=down] (-0.3,0.6) to[out=up,in=down] (0.3,1.2);
        \draw[->] (-0.3,0) to[out=up,in=down] (0.3,0.6) to[out=up,in=down] (-0.3,1.2);
      \end{tikzpicture}
      \ =\
      \begin{tikzpicture}[anchorbase]
        \draw[->] (-0.2,0) -- (-0.2,1.2);
        \draw[->] (0.2,0) -- (0.2,1.2);
      \end{tikzpicture}
      \qquad \text{and} \qquad
      \begin{tikzpicture}[anchorbase]
        \draw[->] (0.4,0) -- (-0.4,1.2);
        \draw[->] (0,0) to[out=up, in=down] (-0.4,0.6) to[out=up,in=down] (0,1.2);
        \draw[->] (-0.4,0) -- (0.4,1.2);
      \end{tikzpicture}
      \ =\
      \begin{tikzpicture}[anchorbase]
        \draw[->] (0.4,0) -- (-0.4,1.2);
        \draw[->] (0,0) to[out=up, in=down] (0.4,0.6) to[out=up,in=down] (0,1.2);
        \draw[->] (-0.4,0) -- (0.4,1.2);
      \end{tikzpicture}
      \ .
    \end{equation}
\end{itemize}
One could write these relations in a more traditional algebraic manner, if so desired.  For example, if we let
\[
  s =
  \begin{tikzpicture}[anchorbase]
    \draw[->] (-0.25,-0.25) to (0.25,0.25);
    \draw[->] (0.25,-0.25) to (-0.25,0.25);
  \end{tikzpicture}
  \ \colon \uparrow \otimes \uparrow\ \to\ \uparrow \otimes \uparrow,
\]
then the two relations \cref{Sn-strings} become
\[
  s^2 = 1_{\uparrow \otimes \uparrow}
  \qquad \text{and} \qquad
  (s \otimes 1_\uparrow) \circ (1_\uparrow \otimes s) \circ (s \otimes 1_\uparrow)
  = (1_\uparrow \otimes s) \circ (s \otimes 1_\uparrow) \circ (1_\uparrow \otimes s).
\]

Now, in any $\kk$-linear category (monoidal or not), we have an endomorphism algebra $\End(X)$ of any object $X$.  The multiplication in this algebra is given by vertical composition.  In $\cS$, every object is of the form $\uparrow^{\otimes n}$ for some $n=0,1,2,\dotsc$.  An example of an endomorphism of $\uparrow^{\otimes 4}$ is
\[
  \begin{tikzpicture}[anchorbase]
    \draw[->] (-0.9,0) to[out=up,in=down] (-0.3,0.6) to[out=up,in=down] (-0.9,1.2) to[out=up,in=down] (-0.9,3) to[out=up,in=down] (-0.3,3.6);
    \draw[->] (-0.3,0) to[out=up,in=down] (-0.9,0.6) to[out=up,in=down] (-0.3,1.2) to[out=up,in=down] (0.3,1.8) to[out=up,in=down] (0.9,2.4) to[out=up,in=down] (0.9,3) to[out=up,in=down] (0.3,3.6);
    \draw[->] (0.3,0) to[out=up,in=down] (0.3,1.2) to[out=up,in=down] (-0.3,1.8) to[out=up,in=down] (-0.3,2.4) to[out=up,in=down] (0.3,3) to[out=up,in=down] (0.9,3.6);
    \draw[->] (0.9,0) to[out=up,in=down] (0.9,1.8) to[out=up,in=down] (0.3,2.4) to[out=up,in=down] (-0.3,3) to[out=up,in=down] (-0.9,3.6);
  \end{tikzpicture}
  \ +2\
  \begin{tikzpicture}[anchorbase]
    \draw[->] (-0.9,0) to (0.9,2.4);
    \draw[->] (-0.3,0) to (-0.9,2.4);
    \draw[->] (0.3,0) to (-0.3,2.4);
    \draw[->] (0.9,0) to (0.3,2.4);
  \end{tikzpicture}
  \ .
\]
Using the relations, we see that this morphism is equal to
\[
  \begin{tikzpicture}[anchorbase]
    \draw[->] (-0.9,1.2) to[out=up,in=down] (-0.9,3) to[out=up,in=down] (-0.3,3.6);
    \draw[->] (-0.3,1.2) to[out=up,in=down] (-0.3,1.8) to[out=up,in=down] (0.3,2.4) to[out=up,in=down] (0.9,3) to[out=up,in=down] (0.3,3.6);
    \draw[->] (0.3,1.2) to[out=up,in=down] (0.9,1.8) to[out=up,in=down] (0.9,2.4) to[out=up,in=down] (0.3,3) to[out=up,in=down] (0.9,3.6);
    \draw[->] (0.9,1.2) to[out=up,in=down] (0.3,1.8) to[out=up,in=down] (-0.3,2.4) to[out=up,in=down] (-0.3,3) to[out=up,in=down] (-0.9,3.6);
  \end{tikzpicture}
  \ +2\
  \begin{tikzpicture}[anchorbase]
    \draw[->] (-0.9,0) to (0.9,2.4);
    \draw[->] (-0.3,0) to (-0.9,2.4);
    \draw[->] (0.3,0) to (-0.3,2.4);
    \draw[->] (0.9,0) to (0.3,2.4);
  \end{tikzpicture}
  \ =\
  \begin{tikzpicture}[anchorbase]
    \draw[->] (-0.9,0) to (-0.3,2.4);
    \draw[->] (-0.3,0) to (0.3,2.4);
    \draw[->] (0.3,0) to (0.9,2.4);
    \draw[->] (0.9,0) to (-0.9,2.4);
  \end{tikzpicture}
  \ +2\
  \begin{tikzpicture}[anchorbase]
    \draw[->] (-0.9,0) to (0.9,2.4);
    \draw[->] (-0.3,0) to (-0.9,2.4);
    \draw[->] (0.3,0) to (-0.3,2.4);
    \draw[->] (0.9,0) to (0.3,2.4);
  \end{tikzpicture}
  \ .
\]

Fix a positive integer $n$ and recall that the group algebra $\kk S_n$ of the symmetric group on $n$ letters has a presentation with generators $s_1,s_2,\dotsc,s_{n-1}$ (the simple transpositions) and relations
\begin{align}
  s_i^2 &= 1, & 1 \le i \le n-1,  \label{Srel1} \\
  s_i s_{i+1} s_i &= s_{i+1} s_i s_{i+1}, & 1 \le i \le n-2,  \label{Srel2} \\
  s_i s_j &= s_j s_i, & 1 \le i,j \le n-1,\ |i-j| > 1. \label{Srel3}
\end{align}
Consider the map
\[
  \kk S_n \to \End_\cS (\uparrow^{\otimes n})
\]
where $s_i$ is sent to the crossing of the $i$-th and $(i+1)$-st strands, labeled from right to left.  In fact, this map is an isomorphism of algebras.  So the category $\cS$ contains the group algebras of all of the symmetric groups!

Note that the presentation of $\cS$ is much more efficient than the presentation of the algebras $\kk S_n$.  To define $\cS$ we need only \emph{one} generator and \emph{two} relations, as opposed to the $n-1$ generators and relations \cref{Srel1,Srel2,Srel3} (whose number is of order $n^2$) in the algebraic presentation of $\kk S_n$, for \emph{each} $n$.  This efficiency comes from the fact that we are generating the algebras \emph{monoidally}, where we have both vertical and horizontal ``multiplication''.  In particular, the ``distant braid relation'' \cref{Srel3} follows for free from the interchange law:
\[
  \begin{tikzpicture}[anchorbase]
    \draw[->] (0,-1) to (0,0) to[out=up,in=down] (0.5,0.5);
    \draw[->] (0.5,-1) to (0.5,0) to[out=up,in=down] (0,0.5);
    \draw[->] (1,-1) to (1,0.5);
    \node at (1.5,-0.25) {$\cdots$};
    \draw[->] (2,-1) to (2,0.5);
    \draw[->] (2.5,-1) to[out=up,in=down] (3,-0.5) to (3,0.5);
    \draw[->] (3,-1) to[out=up,in=down] (2.5,-0.5) to (2.5,0.5);
  \end{tikzpicture}
  \ =\
  \begin{tikzpicture}[anchorbase]
    \draw[->] (0.5,-1) to[out=up,in=down] (0,-0.5) to (0,0.5);
    \draw[->] (0,-1) to[out=up,in=down] (0.5,-0.5) to (0.5,0.5);
    \draw[->] (1,-1) to (1,0.5);
    \node at (1.5,-0.25) {$\cdots$};
    \draw[->] (2,-1) to (2,0.5);
    \draw[->] (2.5,-1) to (2.5,0) to[out=up,in=down] (3,0.5);
    \draw[->] (3,-1) to (3,0) to[out=up,in=down] (2.5,0.5);
  \end{tikzpicture}
  \ .
\]

\subsection{Degenerate affine Hecke algebras\label{sec:dAHA}}

Let $\AH^\dg$ be the strict $\kk$-linear monoidal category $\cS$ defined in \cref{Scat}, but with an additional generating morphism, which we will call a \emph{dot},
\[
  \begin{tikzpicture}[anchorbase]
    \draw[->] (0,0) to (0,0.6);
    \redcircle{(0,0.3)};
  \end{tikzpicture}
  \ \colon \uparrow\ \to\ \uparrow
\]
and one additional relation:
\[
  \begin{tikzpicture}[anchorbase]
    \draw[->] (0,0) -- (0.6,0.6);
    \draw[->] (0.6,0) -- (0,0.6);
    \redcircle{(0.15,.45)};
  \end{tikzpicture}
  \ -\
  \begin{tikzpicture}[anchorbase]
    \draw[->] (0,0) -- (0.6,0.6);
    \draw[->] (0.6,0) -- (0,0.6);
    \redcircle{(.45,.15)};
  \end{tikzpicture}
  \ =\
  \begin{tikzpicture}[anchorbase]
    \draw[->] (0,0) -- (0,0.6);
    \draw[->] (0.3,0) -- (0.3,0.6);
  \end{tikzpicture}\ .
\]
Now
\[
  \End_{\AH^\dg} (\uparrow^{\otimes n})
\]
is isomorphic to the \emph{degenerate affine Hecke algebra} of type $A_{n-1}$.  In the category $\AH^\dg$, the endomorphism algebra of $\uparrow$ is now infinite dimensional, with basis given by
\[
  \begin{tikzpicture}[anchorbase]
    \draw[->] (0,0) to (0,1.1);
    \redcircle{(0,0.2)};
    \redcircle{(0,0.4)};
    \redcircle{(0,0.6)};
    \redcircle{(0,0.8)};
    \draw (0,0.5) node[anchor=west] {$\bigg\} m$ dots};
  \end{tikzpicture}
  ,\ m=0,1,2,\dotsc.
\]

\subsection{The braid group\label{braidcat}}

Consider another strict $\kk$-linear monoidal category $\cB$ with one generating object $\uparrow$ and one generating morphism
\begin{equation} \label{poscross}
  \begin{tikzpicture}[anchorbase]
    \draw[->] (0.25,-0.25) to (-0.25,0.25);
    \draw[wipe] (-0.25,-0.25) to (0.25,0.25);
    \draw[->] (-0.25,-0.25) to (0.25,0.25);
  \end{tikzpicture}
  \ \colon \uparrow \otimes \uparrow\ \to\ \uparrow \otimes \uparrow.
\end{equation}
We want to impose the relation that this morphism is invertible.  Thus we add another generating morphism which is inverse to \cref{poscross}.  Let us denote this inverse by
\begin{equation} \label{negcross}
  \begin{tikzpicture}[anchorbase]
    \draw[->] (-0.25,-0.25) to (0.25,0.25);
    \draw[wipe] (0.25,-0.25) to (-0.25,0.25);
    \draw[->] (0.25,-0.25) to (-0.25,0.25);
  \end{tikzpicture}
  \ \colon \uparrow \otimes \uparrow\ \to\ \uparrow \otimes \uparrow.
\end{equation}
To say that \cref{negcross} and \cref{poscross} are inverse means that we impose the relations
\begin{equation} \label{braid-inv}
  \begin{tikzpicture}[anchorbase]
    \draw[->] (0.3,0) to[out=up,in=down] (-0.3,0.6) to[out=up,in=down] (0.3,1.2);
    \draw[wipe] (-0.3,0) to[out=up,in=down] (0.3,0.6) to[out=up,in=down] (-0.3,1.2);
    \draw[->] (-0.3,0) to[out=up,in=down] (0.3,0.6) to[out=up,in=down] (-0.3,1.2);
  \end{tikzpicture}
  \ =\
  \begin{tikzpicture}[anchorbase]
    \draw[->] (-0.2,0) -- (-0.2,1.2);
    \draw[->] (0.2,0) -- (0.2,1.2);
  \end{tikzpicture}
  \qquad \text{and} \qquad
  \begin{tikzpicture}[anchorbase]
    \draw[->] (-0.3,0) to[out=up,in=down] (0.3,0.6) to[out=up,in=down] (-0.3,1.2);
    \draw[wipe] (0.3,0) to[out=up,in=down] (-0.3,0.6) to[out=up,in=down] (0.3,1.2);
    \draw[->] (0.3,0) to[out=up,in=down] (-0.3,0.6) to[out=up,in=down] (0.3,1.2);
  \end{tikzpicture}
  \ =\
  \begin{tikzpicture}[anchorbase]
    \draw[->] (-0.2,0) -- (-0.2,1.2);
    \draw[->] (0.2,0) -- (0.2,1.2);
  \end{tikzpicture}\ .
\end{equation}
To complete the definition of $\cB$, we impose one more relation:
\begin{equation} \label{braid}
  \begin{tikzpicture}[anchorbase]
    \draw[->] (0.4,0) -- (-0.4,1.2);
    \draw[wipe] (0,0) to[out=up, in=down] (-0.4,0.6) to[out=up,in=down] (0,1.2);
    \draw[->] (0,0) to[out=up, in=down] (-0.4,0.6) to[out=up,in=down] (0,1.2);
    \draw[wipe] (-0.4,0) -- (0.4,1.2);
    \draw[->] (-0.4,0) -- (0.4,1.2);
  \end{tikzpicture}
  \ =\
  \begin{tikzpicture}[anchorbase]
    \draw[->] (0.4,0) -- (-0.4,1.2);
    \draw[wipe] (0,0) to[out=up, in=down] (0.4,0.6) to[out=up,in=down] (0,1.2);
    \draw[->] (0,0) to[out=up, in=down] (0.4,0.6) to[out=up,in=down] (0,1.2);
    \draw[wipe] (-0.4,0) -- (0.4,1.2);
    \draw[->] (-0.4,0) -- (0.4,1.2);
  \end{tikzpicture}
  \ .
\end{equation}
Then $\End_\cB(\uparrow^{\otimes n})$ is isomorphic to the group algebra of the braid group on $n$ strands.  Again, we see that generating these algebras monoidally is extremely efficient.

\subsection{Hecke algebras}

Fix $z \in \kk^\times$.  Let $\cH(z)$ be the strict $\kk$-linear monoidal category $\cB$ defined in \cref{braidcat}, but with one more relation:
\begin{equation} \label{skein}
  \begin{tikzpicture}[anchorbase]
    \draw[->] (0.25,-0.25) to (-0.25,0.25);
    \draw[wipe] (-0.25,-0.25) to (0.25,0.25);
    \draw[->] (-0.25,-0.25) to (0.25,0.25);
  \end{tikzpicture}
  \ -\
  \begin{tikzpicture}[anchorbase]
    \draw[->] (-0.25,-0.25) to (0.25,0.25);
    \draw[wipe] (0.25,-0.25) to (-0.25,0.25);
    \draw[->] (0.25,-0.25) to (-0.25,0.25);
  \end{tikzpicture}
  \ = z\
  \begin{tikzpicture}[anchorbase]
    \draw[->] (0.2,-0.25) to (0.2,0.25);
    \draw[->] (-0.2,-0.25) to (-0.2,0.25);
  \end{tikzpicture}
  \ .
\end{equation}
If $z=0$, the relation \cref{skein} forces the generators \cref{poscross,negcross} to be equal.  The relations \cref{braid-inv,braid} then reduce to \cref{Sn-strings}.  Hence $\cH(0)=\cS$.  On the other hand, if $\kk = \C(q)$ and $z = q - q^{-1}$, then $\End_{\cH(z)} (\uparrow^{\otimes n})$ is isomorphic to the \emph{Iwahori--Hecke algebra} of type $A_{n-1}$.

\subsection{Wreath product algebras}

Let $A$ be an associative $\kk$-algebra.  Let's modify the category $\cS$ from \cref{Scat} by adding an endomorphism of $\uparrow$ for each element of $A$.  More precisely, define the \emph{wreath product category} $\cW(A)$ to be the strict $\kk$-linear monoidal category obtained from $\cS$ by adding morphisms such that we have an algebra homomorphipsm
\[
  A \to \End(\uparrow),
  \qquad
  a \mapsto
  \begin{tikzpicture}[anchorbase]
    \draw[->] (0,0) to (0,0.6);
    \bluedot{(0,0.3)} node[anchor=west,color=black] {\dotlabel{a}};
  \end{tikzpicture}
  \ .
\]
In particular, this means that
\begin{equation} \label{dotlin}
  \begin{tikzpicture}[anchorbase]
    \draw[->] (0,0) to (0,0.6);
    \bluedot{(0,0.3)} node[anchor=west,color=black] {\dotlabel{(\alpha a+ \beta b)}};
  \end{tikzpicture}
  = \alpha\
  \begin{tikzpicture}[anchorbase]
    \draw[->] (0,0) to (0,0.6);
    \bluedot{(0,0.3)} node[anchor=west,color=black] {\dotlabel{a}};
  \end{tikzpicture}
  + \beta\
  \begin{tikzpicture}[anchorbase]
    \draw[->] (0,0) to (0,0.6);
    \bluedot{(0,0.3)} node[anchor=west,color=black] {\dotlabel{b}};
  \end{tikzpicture}
  \qquad \text{and} \qquad
  \begin{tikzpicture}[anchorbase]
    \draw[->] (0,0) to (0,1);
    \bluedot{(0,0.3)} node[anchor=west,color=black] {\dotlabel{b}};
    \bluedot{(0,0.6)} node[anchor=west,color=black] {\dotlabel{a}};
  \end{tikzpicture}
  =\
  \begin{tikzpicture}[anchorbase]
    \draw[->] (0,0) to (0,1);
    \bluedot{(0,0.5)} node[anchor=west,color=black] {\dotlabel{ab}};
  \end{tikzpicture}
  \qquad \text{for all } \alpha,\beta \in \kk,\ a,b \in A.
\end{equation}
We call the closed circles appearing in the above diagrams \emph{tokens}.  We then impose the additional relation
\begin{equation} \label{tokslide}
  \begin{tikzpicture}[anchorbase]
    \draw[->] (0,0) -- (1,1);
    \draw[->] (1,0) -- (0,1);
    \bluedot{(.25,.25)} node [anchor=east, color=black] {\dotlabel{a}};
  \end{tikzpicture}
  \ =\
  \begin{tikzpicture}[anchorbase]
    \draw[->](0,0) -- (1,1);
    \draw[->](1,0) -- (0,1);
    \bluedot{(0.75,.75)} node [anchor=east, color=black] {\dotlabel{a}};
  \end{tikzpicture}
  \ ,\quad a \in A.
\end{equation}
As an example of a diagrammatic proof, note that we can compose \cref{tokslide} on the top and bottom with a crossing to obtain
\[
  \begin{tikzpicture}[anchorbase]
    \draw[->] (0,0) -- (1,1);
    \draw[->] (1,0) -- (0,1);
    \bluedot{(.25,.25)} node [anchor=east, color=black] {\dotlabel{a}};
  \end{tikzpicture}
  \ =\
  \begin{tikzpicture}[anchorbase]
    \draw[->](0,0) -- (1,1);
    \draw[->](1,0) -- (0,1);
    \bluedot{(0.75,.75)} node [anchor=east, color=black] {\dotlabel{a}};
  \end{tikzpicture}
  \ \implies\
  \begin{tikzpicture}[anchorbase]
    \draw[->] (0.25,-0.5) to[out=up,in=down] (-0.25,0) to[out=up,in=down] (0.25,0.5) to[out=up,in=down] (-0.25,1);
    \draw[->] (-0.25,-0.5) to[out=up,in=down] (0.25,0) to[out=up,in=down] (-0.25,0.5) to[out=up,in=down] (0.25,1);
    \bluedot{(-0.25,0)} node[anchor=east,color=black] {\dotlabel{a}};
  \end{tikzpicture}
  \ =\
  \begin{tikzpicture}[anchorbase]
    \draw[->] (0.25,-0.5) to[out=up,in=down] (-0.25,0) to[out=up,in=down] (0.25,0.5) to[out=up,in=down] (-0.25,1);
    \draw[->] (-0.25,-0.5) to[out=up,in=down] (0.25,0) to[out=up,in=down] (-0.25,0.5) to[out=up,in=down] (0.25,1);
    \bluedot{(0.25,0.5)} node[anchor=west,color=black] {\dotlabel{a}};
  \end{tikzpicture}
  \ \stackrel{\cref{Sn-strings}}{\implies} \
  \begin{tikzpicture}[anchorbase]
    \draw[->] (0,0) -- (1,1);
    \draw[->] (1,0) -- (0,1);
    \bluedot{(0.25,.75)} node [anchor=north east, color=black] {\dotlabel{a}};
  \end{tikzpicture}
  \ =\
  \begin{tikzpicture}[anchorbase]
    \draw[->] (0,0) -- (1,1);
    \draw[->] (1,0) -- (0,1);
    \bluedot{(.75,.25)} node [anchor=south west, color=black] {\dotlabel{a}};
  \end{tikzpicture}
  \ .
\]
So tokens also slide up-left through crossings.

One can show that
\[
  \End_{\cW(A)} (\uparrow^{\otimes n}) \cong A^{\otimes n} \rtimes S_n,
\]
the $n$-th \emph{wreath product algebra} associated to $A$.  As a $\kk$-module,
\[
  A^{\otimes n} \rtimes S_n = A^{\otimes n} \otimes_\kk \kk S_n.
\]
Multiplication is determined by
\[
  (a_1 \otimes \pi_1)(a_2 \otimes \pi_2)
  = a_1 (\pi_1 \cdot a_2) \otimes \pi_1 \pi_2,
  \quad a_1, a_2 \in A^{\otimes n},\ \pi_1, \pi_2 \in S_n,
\]
where $\pi_1 \cdot a_2$ denotes the natural action of $\pi_1 \in S_n$ on $a_2 \in A^{\otimes n}$ by permutation of the factors.  Note that $\cW(\kk) = \cS$, the symmetric group category.

\subsection{Affine wreath product algebras}

The wreath product category $\cW(A)$ is a generalization of the symmetric group category $\cS$ that depends on a choice of associative $\kk$-algebra $A$.  We can generalize the degenerate affine Hecke category $\AH^\dg$ in a similar way as long as we have some additional structure on $A$.  In particular, we suppose that we have a $\kk$-linear \emph{trace map}
\[
  \tr \colon A \to \kk
\]
and dual bases $B$ and $\{\chk{b} : b \in B\}$ of $A$ such that
\[
  \tr(\chk{a}b) = \delta_{a,b}
  \quad \text{for all } a,b \in B.
\]
An algebra with such a trace map is called a \emph{Frobenius algebra}.  We will assume for simplicity here that the trace map is symmetric:
\[
  \tr(ab) = \tr(ba) \quad \text{for all } a,b \in A.
\]

It is an easy linear algebra exercise to verify that the element
\[
  \sum_{b \in B} b \otimes \chk{b} \in A \otimes A
\]
is independent of the choice of basis $B$.  Also, for all $x \in A$, we have
\begin{equation} \label{teleport}
  \begin{split}
    \sum_{b \in B} bx \otimes \chk{b}
    = \sum_{a,b \in B} \tr(\chk{a}bx)a \otimes \chk{b}
    = \sum_{a,b \in B} a \otimes \tr(\chk{a}bx) \chk{b}
    \\
    = \sum_{a,b \in B} a \otimes \tr(x\chk{a}b) \chk{b}
    = \sum_{a \in B} a \otimes x\chk{a}.
  \end{split}
\end{equation}

We define the \emph{affine wreath product category} $\AW(A)$ to be the strict $\kk$-linear monoidal category obtained from $\cW(A)$ by adding a generating morphism
\[
  \begin{tikzpicture}[anchorbase]
    \draw[->] (0,0) to (0,0.6);
    \redcircle{(0,0.3)};
  \end{tikzpicture}
  \ \colon \uparrow\ \to\ \uparrow
\]
and the additional relations
\begin{equation} \label{AWPA}
  \begin{tikzpicture}[anchorbase]
    \draw[->] (0,0) -- (0.6,0.6);
    \draw[->] (0.6,0) -- (0,0.6);
    \redcircle{(0.15,.45)};
  \end{tikzpicture}
  \ -\
  \begin{tikzpicture}[anchorbase]
    \draw[->] (0,0) -- (0.6,0.6);
    \draw[->] (0.6,0) -- (0,0.6);
    \redcircle{(.45,.15)};
  \end{tikzpicture}
  \ = \sum_{b \in B}
  \begin{tikzpicture}[anchorbase]
    \draw[->] (0,0) -- (0,0.6);
    \draw[->] (0.3,0) -- (0.3,0.6);
    \bluedot{(0,0.3)} node[anchor=east,color=black] {\dotlabel{b}};
    \bluedot{(0.3,0.3)} node[anchor=west,color=black] {\dotlabel{\chk{b}}};
  \end{tikzpicture}
  \qquad \text{and} \qquad
  \begin{tikzpicture}[anchorbase]
    \draw[->] (0,0) to (0,1);
    \redcircle{(0,0.3)};
    \bluedot{(0,0.6)} node[anchor=east,color=black] {\dotlabel{a}};
  \end{tikzpicture}
  \ =
  \begin{tikzpicture}[anchorbase]
    \draw[->] (0,0) to (0,1);
    \redcircle{(0,0.6)};
    \bluedot{(0,0.3)} node[anchor=west,color=black] {\dotlabel{a}};
  \end{tikzpicture}
  \ ,\quad \text{for all } a \in A.
\end{equation}
To motivate this definition, examine what happens if we add a token labeled $a \in A$ to the bottom of the left strand of the diagrams involved in the first relation in \cref{AWPA}.  For the diagrams on the left side, the token slides up to the top of the right strand:
\[
  \begin{tikzpicture}[anchorbase]
    \draw[->] (-0.05,-0.05) -- (0.65,0.65);
    \draw[->] (0.65,-0.05) -- (-0.05,0.65);
    \redcircle{(0.15,.45)};
    \bluedot{(0.15,0.15)} node[anchor=east,color=black] {\dotlabel{a}};
  \end{tikzpicture}
  \ =\
  \begin{tikzpicture}[anchorbase]
    \draw[->] (-0.05,-0.05) -- (0.65,0.65);
    \draw[->] (0.65,-0.05) -- (-0.05,0.65);
    \redcircle{(0.15,.45)};
    \bluedot{(0.45,0.45)} node[anchor=west,color=black] {\dotlabel{a}};
  \end{tikzpicture}
  \qquad \text{and} \qquad
  \begin{tikzpicture}[anchorbase]
    \draw[->] (-0.05,-0.05) -- (0.65,0.65);
    \draw[->] (0.65,-0.05) -- (-0.05,0.65);
    \redcircle{(.45,.15)};
    \bluedot{(0.15,0.15)} node[anchor=east,color=black] {\dotlabel{a}};
  \end{tikzpicture}
  \ =\
  \begin{tikzpicture}[anchorbase]
    \draw[->] (-0.05,-0.05) -- (0.65,0.65);
    \draw[->] (0.65,-0.05) -- (-0.05,0.65);
    \redcircle{(.45,.15)};
    \bluedot{(0.45,0.45)} node[anchor=west,color=black] {\dotlabel{a}};
  \end{tikzpicture}
  \ .
\]
Since diagrams are linear in the token labels (see \cref{dotlin}), \cref{teleport} tells us that the exact same thing happens with the term on the right side of the first relation in \cref{AWPA}:
\[
  \sum_{b \in B}
  \begin{tikzpicture}[anchorbase]
    \draw[->] (0,0) -- (0,1);
    \draw[->] (0.4,0) -- (0.4,1);
    \bluedot{(0,0.5)} node[anchor=east,color=black] {\dotlabel{b}};
    \bluedot{(0.4,0.5)} node[anchor=west,color=black] {\dotlabel{\chk{b}}};
    \bluedot{(0,0.25)} node[anchor=east,color=black] {\dotlabel{a}};
  \end{tikzpicture}
  \ = \sum_{b \in B}
  \begin{tikzpicture}[anchorbase]
    \draw[->] (0,0) -- (0,1);
    \draw[->] (0.4,0) -- (0.4,1);
    \bluedot{(0,0.5)} node[anchor=east,color=black] {\dotlabel{ba}};
    \bluedot{(0.4,0.5)} node[anchor=west,color=black] {\dotlabel{\chk{b}}};
  \end{tikzpicture}
  \ \stackrel{\cref{teleport}}{=} \sum_{b \in B}
  \begin{tikzpicture}[anchorbase]
    \draw[->] (0,0) -- (0,1);
    \draw[->] (0.4,0) -- (0.4,1);
    \bluedot{(0,0.5)} node[anchor=east,color=black] {\dotlabel{b}};
    \bluedot{(0.4,0.5)} node[anchor=west,color=black] {\dotlabel{a\chk{b}}};
  \end{tikzpicture}
  \ = \sum_{b \in B}
  \begin{tikzpicture}[anchorbase]
    \draw[->] (0,0) -- (0,1);
    \draw[->] (0.4,0) -- (0.4,1);
    \bluedot{(0,0.5)} node[anchor=east,color=black] {\dotlabel{b}};
    \bluedot{(0.4,0.5)} node[anchor=west,color=black] {\dotlabel{\chk{b}}};
    \bluedot{(0.4,0.75)} node[anchor=west,color=black] {\dotlabel{a}};
  \end{tikzpicture}
  \ .
\]
Loosely speaking, tokens can ``teleport'' across the sum appearing in the left relation in \cref{AWPA}.

The name \emph{affine wreath product category} comes from the fact that
\[
  \End_{\AW(A)}(\uparrow^{\otimes n})
\]
is isomorphic to an \emph{affine wreath product algebra}.  See \cite{Sav17} for a detailed discussion of these algebras.  Note that $\AW(\kk)$ is the degenerate affine Hecke category $\AH^\dg$.  (Here we take the trace map $\tr \colon \kk \to \kk$ to be the identity.)

\subsection{Quantum affine wreath product algebras}

One can also define affine versions of the Hecke category $\cH(z)$ and generalizations of these categeories depending on a Frobenius algebra.  We refer the reader to \cite{BSW2,BS18b,RS19} for further details.

\section{Pivotal categories\label{sec:piv}}

We give here a brief overview of pivotal categories.  Further details can be found in \cite[\S1.7, \S2.1]{TV17}.

\subsection{Duality\label{duality}}

Suppose a strict monoidal category has two objects $\uparrow$ and $\downarrow$.  Recalling our convention that we do not draw the identity morphism of the unit object $\one$, a morphism $\one \to \downarrow \otimes \uparrow$ would have string diagram
\[
  \begin{tikzpicture}[anchorbase]
    \draw[->] (-0.3,0) to[out=down,in=down,looseness=2] (0.3,0);
  \end{tikzpicture}
  \ \colon \one \to\ \downarrow \otimes \uparrow,
\]
where we may decorate the cup with some symbol if we have more than one such morphism.  The fact that the bottom of the diagram is empty space indicates that the domain of this morphism is the unit object $\one$. Similarly, we can have
\[
  \begin{tikzpicture}[anchorbase]
    \draw[->] (-0.3,0) to[out=up,in=up,looseness=2] (0.3,0);
  \end{tikzpicture}
  \ \colon \uparrow \otimes \downarrow\ \to \one.
\]

We say that $\downarrow$ is \emph{right dual} to $\uparrow$ (and $\uparrow$ is \emph{left dual} to $\downarrow$ ) if we have morphisms
\[
  \begin{tikzpicture}[anchorbase]
    \draw[->] (-0.3,0) to[out=down,in=down,looseness=2] (0.3,0);
  \end{tikzpicture}
  \ \colon \one \to \downarrow \otimes \uparrow.
  \qquad \text{and} \qquad
  \begin{tikzpicture}[anchorbase]
    \draw[->] (-0.3,0) to[out=up,in=up,looseness=2] (0.3,0);
  \end{tikzpicture}
  \ \colon \uparrow \otimes \downarrow \to \one
\]
such that
\begin{equation} \label{rzigzag}
  \begin{tikzpicture}[anchorbase]
    \draw[<-] (0.6,0) to (0.6,0.5) to[out=up,in=up,looseness=2] (0.3,0.5) to[out=down,in=down,looseness=2] (0,0.5) to (0,1);
  \end{tikzpicture}
  \ =\
  \begin{tikzpicture}[anchorbase]
    \draw[<-] (0,0) to (0,1);
  \end{tikzpicture}
  \qquad \text{and} \qquad
  \begin{tikzpicture}[anchorbase]
    \draw[<-] (0.6,1) to (0.6,0.5) to[out=down,in=down,looseness=2] (0.3,0.5) to[out=up,in=up,looseness=2] (0,0.5) to (0,0);
  \end{tikzpicture}
  \ =\
  \begin{tikzpicture}[anchorbase]
    \draw[->] (0,0) to (0,1);
  \end{tikzpicture}
  \ .
\end{equation}
(The above relations are analogous to the unit-counit formulation of adjunction of functors.)  A monoidal category in which every object has both left and right duals is called a \emph{rigid}, or \emph{autonomous}, category.

If $\uparrow$ and $\downarrow$ are both left and right dual to each other, then, in addition to the above, we also have
\[
  \begin{tikzpicture}[anchorbase]
    \draw[<-] (-0.3,0) to[out=down,in=down,looseness=2] (0.3,0);
  \end{tikzpicture}
  \ \colon \one \to \uparrow \otimes \downarrow
  \qquad \text{and} \qquad
  \begin{tikzpicture}[anchorbase]
    \draw[<-] (-0.3,0) to[out=up,in=up,looseness=2] (0.3,0);
  \end{tikzpicture}
  \ \colon \downarrow \otimes \uparrow \to \one
\]
such that
\begin{equation} \label{lzigzag}
  \begin{tikzpicture}[anchorbase]
    \draw[->] (0.6,0) to (0.6,0.5) to[out=up,in=up,looseness=2] (0.3,0.5) to[out=down,in=down,looseness=2] (0,0.5) to (0,1);
  \end{tikzpicture}
  \ =\
  \begin{tikzpicture}[anchorbase]
    \draw[->] (0,0) to (0,1);
  \end{tikzpicture}
  \qquad \text{and} \qquad
  \begin{tikzpicture}[anchorbase]
    \draw[->] (0.6,1) to (0.6,0.5) to[out=down,in=down,looseness=2] (0.3,0.5) to[out=up,in=up,looseness=2] (0,0.5) to (0,0);
  \end{tikzpicture}
  \ =\
  \begin{tikzpicture}[anchorbase]
    \draw[<-] (0,0) to (0,1);
  \end{tikzpicture}
  \ .
\end{equation}

To give a concrete example of duality in a monoidal category, consider the category $\Vect_\kk$ of finite-dimensional $\kk$-vector spaces, where $\kk$ is a field (see \cref{centerVect}).  In this category, the unit object is $\kk$.  We claim that, if $V$ is any finite-dimensional $\kk$-vector space, the dual vector space $V^*$ is both right and left dual to $V$ in the sense defined above.  Indeed, fix a basis $B$ of $V$, and let $\{\delta_v : v \in B\}$ denote the dual basis of $V^*$.  Viewing $V$ and $\uparrow$ and $V^*$ as $\downarrow$, we define
\begin{align*}
  \begin{tikzpicture}[anchorbase]
    \draw[->] (-0.3,0) to[out=down,in=down,looseness=2] (0.3,0);
  \end{tikzpicture}
  \ \colon \kk &\to V^* \otimes V, \quad \alpha \mapsto \alpha \sum_{v \in B} \delta_v \otimes v,
  &
  \begin{tikzpicture}[anchorbase]
    \draw[->] (-0.3,0) to[out=up,in=up,looseness=2] (0.3,0);
  \end{tikzpicture}
  \ \colon V \otimes V^* &\to \kk, \quad \sum_{i=1}^n v_i \otimes f_i \mapsto \sum_{i=1}^n f_i(v_i),
  \\
  \begin{tikzpicture}[anchorbase]
    \draw[<-] (-0.3,0) to[out=down,in=down,looseness=2] (0.3,0);
  \end{tikzpicture}
  \ \colon \kk &\to V \otimes V^*,\quad \alpha \mapsto \alpha \sum_{v \in B} v \otimes \delta_v,
  &
  \begin{tikzpicture}[anchorbase]
    \draw[<-] (-0.3,0) to[out=up,in=up,looseness=2] (0.3,0);
  \end{tikzpicture}
  \ \colon V^* \otimes V &\to \kk,\quad \sum_{i=1}^n f_i \otimes v_i \mapsto \sum_{i=1}^n f_i(v_i).
\end{align*}

Let's check the right-hand relation in \cref{rzigzag}.  The left-hand side is the composition
\begin{gather*}
  V \cong V \otimes \kk \xrightarrow{1_V \otimes\, \rcup} V \otimes V^* \otimes V \xrightarrow{\rcap \otimes 1_V} \kk \otimes V \cong V,
  \\
  w \mapsto w \otimes 1 \mapsto \sum_{v \in B} w \otimes \delta_v \otimes v \mapsto \sum_{v \in B} \delta_v(w) \otimes v \mapsto \sum_{v \in V} \delta_v(w) v = w.
\end{gather*}
Thus, this composition is precisely the identity map $1_V$, and so the right-hand relation in \cref{rzigzag} is satisfied.  The verification of the left-hand equality in \cref{rzigzag} and both equalities in \cref{lzigzag} are analogous and are left as an exercise for the reader.

If $\uparrow$ and $\downarrow$ are both left and right dual to each other, then we may form closed diagrams of the form
\begin{equation} \label{lintrace}
  \begin{tikzpicture}[anchorbase]
    \draw[<-] (0.5,0) arc(0:360:0.5);
    \filldraw[draw=black,fill=white] (-0.3,0) arc(0:360:0.2);
    \node at (-0.5,0) {\tokenlabel{f}};
  \end{tikzpicture}
  \ ,\quad f \in \End(\uparrow).
\end{equation}
Such closed diagrams live in the center $\End(\one)$ of the category.  Let's consider such a diagram in the category $\Vect_\kk$, where we know from \cref{centerVect} that the center of the category is isomorphic to $\kk$.  If $f \in \End_\kk(V)$, then the diagram \cref{lintrace} is the composition
\begin{gather} \nonumber
  \kk
  \xrightarrow{\lcup} V \otimes V^*
  \xrightarrow{f \otimes 1_{V^*}}  V \otimes V^*
  \xrightarrow{\rcap} \kk,
  \\ \label{centertrcomkp}
  \alpha
  \mapsto \alpha \sum_{v \in B} v \otimes \delta_v
  \mapsto \alpha \sum_{v \in B} f(v) \otimes \delta_v
  \mapsto \alpha \sum_{v \in B} \delta_v(f(v))
  = \alpha \tr(f),
\end{gather}
where $\tr(f)$ is the usual trace of the linear map $f$.  Therefore, under the isomorphism \cref{Endk}, the diagram \cref{lintrace} corresponds to $\tr(f)$.

\subsection{Mates\label{mates}}

Suppose that an object $X$ in a strict monoidal category has a right dual $X^*$.  Since we will now need to consider multiple objects with duals, we will denote the identity endomorphisms of $X$ and $X^*$ by upward and downward strands labeled $X$:
\[
  1_X =
  \begin{tikzpicture}[>=To,baseline={(0,0.15)}]
    \draw[->] (0,0) node[anchor=north] {$X$} to (0,0.5);
  \end{tikzpicture}
  \qquad \text{and} \qquad
  1_{X^*} =
  \begin{tikzpicture}[>=To,baseline={(0,0.15)}]
    \draw[<-] (0,0) node[anchor=north] {$X$} to (0,0.5);
  \end{tikzpicture}
  \ .
\]
As explained in \cref{duality}, the fact that $X^*$ is right dual to $X$ means that we have morphisms
\begin{equation} \label{pivcc}
  \begin{tikzpicture}[>=To,baseline={(0,-0.25)}]
    \draw[->] (-0.3,0) node[anchor=south] {$X$} to[out=down,in=down,looseness=2] (0.3,0);
  \end{tikzpicture}
  \ \colon \one \to X^* \otimes X
  \qquad \text{and} \qquad
  \begin{tikzpicture}[>=To,baseline={(0,0)}]
    \draw[->] (-0.3,0) node[anchor=north] {$X$} to[out=up,in=up,looseness=2] (0.3,0);
  \end{tikzpicture}
  \ \colon X \otimes X^* \to \one
\end{equation}
such that
\begin{equation} \label{pivzz}
  \begin{tikzpicture}[>=To,baseline={(0,0.5)}]
    \draw[<-] (0.6,0) to (0.6,0.5) to[out=up,in=up,looseness=2] (0.3,0.5) to[out=down,in=down,looseness=2] (0,0.5) to (0,1) node[anchor=south] {$X$};
  \end{tikzpicture}
  \ =\
  \begin{tikzpicture}[>=To,baseline={(0,0.5)}]
    \draw[<-] (0,0) to (0,1) node[anchor=south] {$X$};
  \end{tikzpicture}
  \qquad \text{and} \qquad
  \begin{tikzpicture}[>=To,baseline={(0,0.5)}]
    \draw[<-] (0.6,1) node[anchor=south] {$X$} to (0.6,0.5) to[out=down,in=down,looseness=2] (0.3,0.5) to[out=up,in=up,looseness=2] (0,0.5) to (0,0);
  \end{tikzpicture}
  \ =\
  \begin{tikzpicture}[>=To,baseline={(0,0.5)}]
    \draw[->] (0,0) to (0,1) node[anchor=south] {$X$};
  \end{tikzpicture}
  \ .
\end{equation}
Here we again label the strands with $X$ to distinguish between the cups and caps for different objects.

Suppose $X$ and $Y$ have right duals $X^*$ and $Y^*$, respectively.  Then every
\[
  \begin{tikzpicture}[anchorbase]
    \draw[->] (0,0) node[anchor=north] {$X$} to (0,1) node[anchor=south] {$Y$};
    \filldraw[black,fill=white] (0,0.7) arc(90:450:0.2);
    \node at (0,0.5) {\tokenlabel{f}};
  \end{tikzpicture}
  \ \in \End(X,Y)
  \qquad \text{has \emph{right mate}} \qquad
  \begin{tikzpicture}[anchorbase]
    \draw[<-] (0.6,-0.6) node[anchor=north] {$Y$} to (0.6,0) to[out=up,in=up,looseness=2] (0,0) to[out=down,in=down,looseness=2] (-0.6,0) to (-0.6,0.6) node[anchor=south] {$X$};
    \filldraw[black,fill=white] (0,0.2) arc(90:450:0.2);
    \node at (0,0) {\tokenlabel{f}};
  \end{tikzpicture}
  \ \in \End(Y^*,X^*).
\]

Now suppose $\cC$ is a strict monoidal category in which every object has a right dual.  Consider the map $R \colon \cC \to \cC$ that sends every object to its right dual and every morphism to its right mate.  How does $R$ behave with respect to vertical composition?  Omitting object labels, we have
\begin{multline*}
  R
  \left(
    \begin{tikzpicture}[anchorbase]
      \draw[->] (0,0) to (0,1);
      \filldraw[black,fill=white] (0,0.7) arc(90:450:0.2);
      \node at (0,0.5) {\tokenlabel{f}};
    \end{tikzpicture}
  \right)
  \circ
  R
  \left(
    \begin{tikzpicture}[anchorbase]
      \draw[->] (0,0) to (0,1);
      \filldraw[black,fill=white] (0,0.7) arc(90:450:0.2);
      \node at (0,0.5) {\tokenlabel{g}};
    \end{tikzpicture}
  \right)
  =
  \left(
    \begin{tikzpicture}[anchorbase]
      \draw[<-] (0.6,-0.6) to (0.6,0) to[out=up,in=up,looseness=2] (0,0) to[out=down,in=down,looseness=2] (-0.6,0) to (-0.6,0.6);
      \filldraw[black,fill=white] (0,0.2) arc(90:450:0.2);
      \node at (0,0) {\tokenlabel{f}};
    \end{tikzpicture}
  \right)
  \circ
  \left(
    \begin{tikzpicture}[anchorbase]
      \draw[<-] (0.6,-0.6) to (0.6,0) to[out=up,in=up,looseness=2] (0,0) to[out=down,in=down,looseness=2] (-0.6,0) to (-0.6,0.6);
      \filldraw[black,fill=white] (0,0.2) arc(90:450:0.2);
      \node at (0,0) {\tokenlabel{g}};
    \end{tikzpicture}
  \right)
  =
  \begin{tikzpicture}[anchorbase]
    \draw[<-] (1.8,-1.8) to (1.8,-1.2) to[out=up,in=up,looseness=2] (1.2,-1.2) to[out=down,in=down,looseness=2] (0.6,-1.2) to (0.6,-0.6) to (0.6,0) to[out=up,in=up,looseness=2] (0,0) to[out=down,in=down,looseness=2] (-0.6,0) to (-0.6,0.6);
    \filldraw[black,fill=white] (0,0.2) arc(90:450:0.2);
    \node at (0,0) {\tokenlabel{f}};
    \filldraw[black,fill=white] (1.2,-1) arc(90:450:0.2);
    \node at (1.2,-1.2) {\tokenlabel{g}};
  \end{tikzpicture}
  \stackrel{\text{(interchange)}}{=}
  \begin{tikzpicture}[anchorbase]
    \draw[<-] (1.8,-1.8) to (1.8,0) to[out=up,in=up,looseness=2] (1.2,0) to (1.2,-0.6) to[out=down,in=down,looseness=2] (0.6,-0.6) to[out=up,in=up,looseness=2] (0,-0.6) to (0,-1.2) to[out=down,in=down,looseness=2] (-0.6,-1.2) to (-0.6,0.6);
    \filldraw[black,fill=white] (0,-1) arc(90:450:0.2);
    \node at (0,-1.2) {\tokenlabel{f}};
    \filldraw[black,fill=white] (1.2,0.2) arc(90:450:0.2);
    \node at (1.2,0) {\tokenlabel{g}};
  \end{tikzpicture}
  \\
  \stackrel{\cref{rzigzag}}{=}
  \begin{tikzpicture}[anchorbase]
    \draw[->] (-0.6,1) to (-0.6,-0.3) to[out=down,in=down,looseness=2] (0,-0.3) to (0,0.3) to[out=up,in=up,looseness=2] (0.6,0.3) to (0.6,-1);
    \filldraw[black,fill=white] (0,-0.1) arc(90:450:0.2);
    \node at (0,-0.3) {\tokenlabel{f}};
    \filldraw[black,fill=white] (0,0.5) arc(90:450:0.2);
    \node at (0,0.3) {\tokenlabel{g}};
  \end{tikzpicture}
  =
  \begin{tikzpicture}[anchorbase]
    \draw[->] (-0.6,1) to (-0.6,-0.3) to[out=down,in=down,looseness=2] (0,-0.3) to (0,0.3) to[out=up,in=up,looseness=2] (0.6,0.3) to (0.6,-1);
    \filldraw[black,fill=white] (0,0.3) arc(90:450:0.3);
    \node at (0,0) {\tokenlabel{g \circ f}};
  \end{tikzpicture}
  = R
  \left(
    \begin{tikzpicture}[anchorbase]
      \draw[->] (0,0) to (0,1.2);
      \filldraw[black,fill=white] (0,0.9) arc(90:450:0.3);
      \node at (0,0.6) {\tokenlabel{g \circ f}};
    \end{tikzpicture}
  \right).
\end{multline*}
It follows that $R$ is a \emph{contravariant functor}.  In particular, for every object $X$, the functor $R$ induces a monoid anti-automorphism $\End(X) \to \End(X^*)$.  (If $\cC$ is strict $\kk$-linear monoidal, then this is an algebra anti-automorphism.)

In a manner analogous to the above, if $X^*$ and $Y^*$ are \emph{left} dual to $X$ and $Y$, respectively, then every
\[
  \begin{tikzpicture}[anchorbase]
    \draw[->] (0,0) node[anchor=north] {$X$} to (0,1) node[anchor=south] {$Y$};
    \filldraw[black,fill=white] (0,0.7) arc(90:450:0.2);
    \node at (0,0.5) {\tokenlabel{f}};
  \end{tikzpicture}
  \ \in \End(X,Y)
  \qquad \text{has \emph{left mate}} \qquad
  \begin{tikzpicture}[anchorbase]
    \draw[<-] (-0.6,-0.6) node[anchor=north] {$Y^*$} to (-0.6,0) to[out=up,in=up,looseness=2] (0,0) to[out=down,in=down,looseness=2] (0.6,0) to (0.6,0.6) node[anchor=south] {$X^*$};
    \filldraw[black,fill=white] (0,0.2) arc(90:450:0.2);
    \node at (0,0) {\tokenlabel{f}};
  \end{tikzpicture}
  \ \in \End(Y^*,X^*).
\]
This gives another contravariant endofunctor of $\cC$.

\subsection{Pivotal categories}

Let $\cC$ be a strict monoidal category.  Suppose that all objects have right duals, and $(X^*)^* = X$ for every object $X$.  It then follows that $X^*$ is also \emph{left} dual to $X$ for every object $X$.  In particular, in the language of \cref{mates}, we define a left cup and cap labeled $X$ to be a right cup and cap, respectively, labeled $X^*$:
\[
  \begin{tikzpicture}[>=To,baseline={(0,-0.25)}]
    \draw[<-] (-0.3,0) to[out=down,in=down,looseness=2] (0.3,0) node[anchor=south] {$X$};
  \end{tikzpicture}
  \ :=\
  \begin{tikzpicture}[>=To,baseline={(0,-0.25)}]
    \draw[->] (-0.3,0) node[anchor=south] {$X^*$} to[out=down,in=down,looseness=2] (0.3,0);
  \end{tikzpicture}
  \qquad \text{and} \qquad
  \begin{tikzpicture}[>=To,baseline={(0,0)}]
    \draw[<-] (-0.3,0) to[out=up,in=up,looseness=2] (0.3,0) node[anchor=north] {$X$};
  \end{tikzpicture}
  \ :=\
  \begin{tikzpicture}[>=To,baseline={(0,0)}]
    \draw[->] (-0.3,0) node[anchor=north] {$X^*$} to[out=up,in=up,looseness=2] (0.3,0);
  \end{tikzpicture}
  \ .
\]
If we also add the requirement that the duality data (i.e.\ the cups and caps) be compatible with the tensor product and that right mates always equal left mates, we get the following definition.

\begin{defin}[Strict pivotal category]
  A strict monoidal category $\cC$ is a \emph{strict pivotal category} if every object $X$ has a right dual $X^*$  with (fixed) morphisms \cref{pivcc} satisfying \cref{pivzz} and the following three additional conditions:
  \begin{enumerate}
    \item For all objects $X$ and $Y$ in $\cC$,
      \[
        (X^*)^* = X,\quad
        (X \otimes Y)^* = Y^* \otimes X^*,\quad
        \one^* = \one.
      \]

    \item For all objects $X$ and $Y$ in $\cC$, we have
      \[
        \begin{tikzpicture}[anchorbase]
          \draw[->] (-0.45,0) node[anchor=south] {$X \otimes Y$} to[out=down,in=down,looseness=2] (0.45,0);
        \end{tikzpicture}
        \ =
        \begin{tikzpicture}[anchorbase]
          \draw[->] (-0.6,0) node[anchor=south] {$X$} to[out=down,in=down,looseness=2] (0.6,0);
          \draw[->] (-0.3,0) node[anchor=south] {$Y$} to[out=down,in=down,looseness=2] (0.3,0);
        \end{tikzpicture}
        \qquad \text{and} \qquad
        \begin{tikzpicture}[anchorbase]
          \draw[->] (-0.45,0) node[anchor=north] {$X \otimes Y$} to[out=up,in=up,looseness=2] (0.45,0);
        \end{tikzpicture}
        \ =
        \begin{tikzpicture}[anchorbase]
          \draw[->] (-0.6,0) node[anchor=north] {$X$} to[out=up,in=up,looseness=2] (0.6,0);
          \draw[->] (-0.3,0) node[anchor=north] {$Y$} to[out=up,in=up,looseness=2] (0.3,0);
        \end{tikzpicture}
        \ .
      \]

    \item For every morphism $f \colon X \to Y$ in $\cC$, its right and left mates are equal:
      \begin{equation} \label{eq:equalmates}
        \begin{tikzpicture}[anchorbase]
          \draw[<-] (0.6,-0.6) node[anchor=north] {$Y$} to (0.6,0) to[out=up,in=up,looseness=2] (0,0) to[out=down,in=down,looseness=2] (-0.6,0) to (-0.6,0.6) node[anchor=south] {$X$};
          \filldraw[black,fill=white] (0,0.2) arc(90:450:0.2);
          \node at (0,0) {\tokenlabel{f}};
        \end{tikzpicture}
        \ =\
        \begin{tikzpicture}[anchorbase]
          \draw[<-] (-0.6,-0.6) node[anchor=north] {$Y$} to (-0.6,0) to[out=up,in=up,looseness=2] (0,0) to[out=down,in=down,looseness=2] (0.6,0) to (0.6,0.6) node[anchor=south] {$X$};
          \filldraw[black,fill=white] (0,0.2) arc(90:450:0.2);
          \node at (0,0) {\tokenlabel{f}};
        \end{tikzpicture}
        \ .
      \end{equation}
  \end{enumerate}
  It is important to note that a strict pivotal structure is extra data on the category $\cC$, namely the morphisms \cref{pivcc}.
\end{defin}

If $\cC$ is strict pivotal, and
\[
  \begin{tikzpicture}[anchorbase]
    \draw[->] (0,0) node[anchor=north] {$X$} to (0,1) node[anchor=south] {$Y$};
    \filldraw[black,fill=white] (0,0.7) arc(90:450:0.2);
    \node at (0,0.5) {\tokenlabel{f}};
  \end{tikzpicture}
  \in \Hom(X,Y)\ ,
\]
then we typically \emph{define} the corresponding coupon on a downward strand to be the right (equivalently, left) mate:
\[
  \begin{tikzpicture}[anchorbase]
    \draw[<-] (0,0) node[anchor=north] {$Y$} to (0,1) node[anchor=south] {$X$};
    \filldraw[black,fill=white] (0,0.7) arc(90:450:0.2);
    \node at (0,0.5) {\tokenlabel{f}};
  \end{tikzpicture}
  :=
  \begin{tikzpicture}[anchorbase]
    \draw[<-] (0.6,-0.6) node[anchor=north] {$Y$} to (0.6,0) to[out=up,in=up,looseness=2] (0,0) to[out=down,in=down,looseness=2] (-0.6,0) to (-0.6,0.6) node[anchor=south] {$X$};
    \filldraw[black,fill=white] (0,0.2) arc(90:450:0.2);
    \node at (0,0) {\tokenlabel{f}};
  \end{tikzpicture}
  \ =\
  \begin{tikzpicture}[anchorbase]
    \draw[<-] (-0.6,-0.6) node[anchor=north] {$Y$} to (-0.6,0) to[out=up,in=up,looseness=2] (0,0) to[out=down,in=down,looseness=2] (0.6,0) to (0.6,0.6) node[anchor=south] {$X$};
    \filldraw[black,fill=white] (0,0.2) arc(90:450:0.2);
    \node at (0,0) {\tokenlabel{f}};
  \end{tikzpicture}
  \ .
\]

Suppose a strict monoidal category $\cC$ is defined in terms of generators and relations, and that each generating object $X$ has a right dual generating object $X^*$, with $(X^*)^* = X$.  Then in order to show that $\cC$ is pivotal, it suffices to show that the right and left mates of each generating morphism are equal.  The axioms of a strict pivotal category then uniquely determine the duality data for arbitrary objects, which are tensor products of the generating objects.

In a strict pivotal category, isotopic string diagrams represent the same morphism!  (See \cite[\S2.4]{TV17} for a detailed discussion.)  This allows us to use geometric intuition and topological arguments in the study of such categories.  In some places in the literature, the strict pivotal nature of a category is implicit in the definition.  More precisely, categories where morphisms consist of planar diagrams \emph{up to isotopy} are strict pivotal by definition.  This is the case, for example, for the Heisenberg categories defined in \cite{Kho14,CL12,RS17} and the categorified quantum group of \cite{KL10}.

One of the simplest examples of a strict pivotal category is the \emph{Temperley--Lieb category} $\mathcal{TL}(\delta)$, $\delta \in \kk$.  This is a strict $\kk$-linear monoidal category on one generating object $X$.  We make this object self-dual by adding generating morphisms
\[
  \begin{tikzpicture}[anchorbase]
    \draw (-0.3,0) to[out=down,in=down,looseness=2] (0.3,0);
  \end{tikzpicture}
  \ \colon \one \to\ X \otimes X,
  \qquad
  \begin{tikzpicture}[anchorbase]
    \draw (-0.3,0) to[out=up,in=up,looseness=2] (0.3,0);
  \end{tikzpicture}
  \ \colon X \otimes X \to \one,
\]
and relations
\[
  \begin{tikzpicture}[>=To,baseline={(0,0.5)}]
    \draw (0.6,0) to (0.6,0.5) to[out=up,in=up,looseness=2] (0.3,0.5) to[out=down,in=down,looseness=2] (0,0.5) to (0,1);
  \end{tikzpicture}
  \ =\
  \begin{tikzpicture}[>=To,baseline={(0,0.5)}]
    \draw (0,0) to (0,1);
  \end{tikzpicture}
  \ =\
  \begin{tikzpicture}[>=To,baseline={(0,0.5)}]
    \draw (0.6,1) to (0.6,0.5) to[out=down,in=down,looseness=2] (0.3,0.5) to[out=up,in=up,looseness=2] (0,0.5) to (0,0);
  \end{tikzpicture}
  \ .
\]
We also impose the relation
\[
  \begin{tikzpicture}[anchorbase]
    \draw (0,0) circle(0.3);
  \end{tikzpicture}
  \ = \delta.
\]
The endomorphism algebra $\End_{\mathcal{TL}(\delta)}(X^{\otimes n})$ is the \emph{Temperley--Lieb algebra} $TL_n(\delta)$.

\section{Categorification\label{sec:cat}}

\subsection{Additive categories}

A $\kk$-linear category is said to be \emph{additive} if it admits all finitary biproducts (including the empty biproduct, which is a \emph{zero object}).  For example, if $A$ is an associative $\kk$-algebra, then the category of left modules over $A$ is additive, with biproduct given by the direct sum $\oplus$ of modules.

Given a $\kk$-linear category $\cC$, we can enlarge it to an additive category by taking its \emph{additive envelope} $\Add(\cC)$.  The objects of $\Add(\cC)$ are formal finite direct sums
\[
  \bigoplus_{i=1}^n X_i
\]
of objects $X_i$ in $\cC$.  Morphisms
\[
  f \colon \bigoplus_{i=1}^n X_i \to \bigoplus_{j=1}^m Y_j
\]
are $m \times n$ matrices, where the $(j,i)$-entry is a morphism
\[
  f_{i,j} \colon X_i \to Y_j.
\]
Composition is given by the usual rules of matrix multiplication.

\begin{eg}
  Let $A$ be an associative $\kk$-algebra, and let $\cC$ be the $\kk$-linear category of free rank-one left $A$-modules.  Then $\Add(\cC)$ is equivalent to the category of free left $A$-modules.
\end{eg}

\subsection{The Grothendieck ring}

Suppose $\cC$ is an additive $\kk$-linear category.  Let $\Iso_\Z(C)$ denote the free abelian group generated by isomorphism classes of objects of $\cC$, and let $[X]_\cong$ denote the isomorphism class of an object $X$.  Let $J$ denote the subgroup generated by the elements
\[
  [X \oplus Y]_\cong - [X]_\cong - [Y]_\cong,
  \quad X,Y \text{ objects of } \cC.
\]
The \emph{split Grothendieck group} of $\cC$ is
\[
  K_0(\cC) := \Iso_\Z(\cC) / J.
\]
In general, the split Grothendieck group is simply an abelian group.  However, if $\cC$ is an additive $\kk$-linear \emph{monoidal} category, then $K_0(\cC)$ is a ring with multiplication given by
\[
  [X]_\cong \cdot [Y]_\cong = [X \otimes Y]_\cong
\]
for objects $X$ and $Y$ (with the multiplication extended to all of $K_0(\cC)$ by linearity).

The process of passing to the split Grothendieck ring is a form of \emph{decategorification}.  The process of \emph{categorification} is a one-sided inverse to this procedure.  Namely, to categorify a ring $R$ is to find monoidal category $\cC$ such that $K_0(\cC) \cong R$ as rings.

\begin{eg}[Categorification of the ring of integers] \label{Zcat}
  Suppose $\kk$ is a field and let $\Vect_\kk$ be the category of finite-dimensional $\kk$-vector spaces.  This is an additive $\kk$-linear category under the usual direct sum of vector spaces.  Up to isomorphism, every vector space is determined uniquely by its dimension.  Thus
  \[
    \Iso_\Z(\C) = \Span_\Z \{[\kk^n]_\cong : n \in \N\}.
  \]
  Now, for an $n$-dimensional vector space $V$, we have
  \[
    V \cong \kk^{\oplus n},
  \]
  and so $[V]_\cong = n [\kk]_\cong$ in $K_0(\Vect_\kk)$.  It follows that we have an isomorphism
  \begin{equation} \label{K0Vect}
    K_0(\Vect_\kk) \xrightarrow{\cong} \Z,\quad
    \sum_{i=1}^n a_i [V_i]_{\cong} \mapsto \sum_{i=1}^n a_i \dim V_i.
  \end{equation}
  Since, for finite-dimensional vector spaces $U$ and $V$, we have $\dim (U \otimes V) = (\dim U)(\dim V)$, the isomorphism \cref{K0Vect} is one of rings.  In other words, $\Vect_\kk$ is a categorification of the ring of integers.
\end{eg}

\subsection{The trace\label{sec:trace}}

There is another common method of decategorification, which we now explain.  (See also \cite[\S2.6]{TV17}.)  Suppose $\cC$ is a $\kk$-linear category.  The \emph{trace}, or \emph{zeroth Hochschild homology}, of $\cC$ is the $\kk$-module
\[
  \Tr(\cC) := \left( \bigoplus_X \End_\cC(X) \right) / \Span_\kk \{f \circ g - g \circ f\},
\]
where the sum is over all objects $X$ of $\cC$, and $f$ and $g$ run through all pairs of morphisms $f : X \to Y$ and $g : Y \to X$ in $\cC$.  We let $[f] \in Tr(\cC)$ denote the class of an endomorphism $f \in \End_\cC(X)$.

If the category $\cC$ is strict pivotal, we can think of the trace as consisting of diagrams on an annulus.  In particular, if
\[
  \begin{tikzpicture}[anchorbase]
    \draw (0,0) to (0,1);
    \filldraw[black,fill=white] (0,0.7) arc(90:450:0.2);
    \node at (0,0.5) {\tokenlabel{f}};
  \end{tikzpicture}
\]
is an endomorphism in $\cC$, then we picture $[f]$ as
\[
  \begin{tikzpicture}[anchorbase]
    \filldraw[thick,draw=green!60!black,fill=green!20!white] (1,0) arc(0:360:1);
    \filldraw[thick,draw=green!60!black,fill=white] (0.2,0) arc(0:360:0.2);
    \draw (0.6,0) arc(0:360:0.6);
    \filldraw[draw=black,fill=white] (-0.4,0) arc(0:360:0.2);
    \node at (-0.6,0) {\tokenlabel{f}};
  \end{tikzpicture}
  \ .
\]
The fact that $[f \circ g] = [g \circ f]$ in $\Tr(\cC)$ then corresponds to the fact we can slide diagrams around the annulus:
\[
  \begin{tikzpicture}[anchorbase]
    \filldraw[thick,draw=green!60!black,fill=green!20!white] (1,0) arc(0:360:1);
    \filldraw[thick,draw=green!60!black,fill=white] (0.2,0) arc(0:360:0.2);
    \draw (0.6,0) arc(0:360:0.6);
    \filldraw[draw=black,fill=white] (-0.3,0) arc(0:360:0.275);
    \node at (-0.6,0) {\tokenlabel{f \circ g}};
  \end{tikzpicture}
  \ =\
  \begin{tikzpicture}[anchorbase]
    \filldraw[thick,draw=green!60!black,fill=green!20!white] (1,0) arc(0:360:1);
    \filldraw[thick,draw=green!60!black,fill=white] (0.2,0) arc(0:360:0.2);
    \draw (0.6,0) arc(0:360:0.6);
    \filldraw[draw=black,fill=white] (-0.35,0.25) arc(0:360:0.2);
    \filldraw[draw=black,fill=white] (-0.35,-0.25) arc(0:360:0.2);
    \node at (-0.55,0.25) {\tokenlabel{f}};
    \node at (-0.55,-0.25) {\tokenlabel{g}};
  \end{tikzpicture}
  \ =\
  \begin{tikzpicture}[anchorbase]
    \filldraw[thick,draw=green!60!black,fill=green!20!white] (1,0) arc(0:360:1);
    \filldraw[thick,draw=green!60!black,fill=white] (0.2,0) arc(0:360:0.2);
    \draw (0.6,0) arc(0:360:0.6);
    \filldraw[draw=black,fill=white] (-0.4,0) arc(0:360:0.2);
    \node at (-0.6,0) {\tokenlabel{g}};
    \filldraw[draw=black,fill=white] (0.8,0) arc(0:360:0.2);
    \node at (0.6,0) {\tokenlabel{f}};
  \end{tikzpicture}
  \ =\
  \begin{tikzpicture}[anchorbase]
    \filldraw[thick,draw=green!60!black,fill=green!20!white] (1,0) arc(0:360:1);
    \filldraw[thick,draw=green!60!black,fill=white] (0.2,0) arc(0:360:0.2);
    \draw (0.6,0) arc(0:360:0.6);
    \filldraw[draw=black,fill=white] (-0.35,0.25) arc(0:360:0.2);
    \filldraw[draw=black,fill=white] (-0.35,-0.25) arc(0:360:0.2);
    \node at (-0.55,0.25) {\tokenlabel{g}};
    \node at (-0.55,-0.25) {\tokenlabel{f}};
  \end{tikzpicture}
  \ =\
  \begin{tikzpicture}[anchorbase]
    \filldraw[thick,draw=green!60!black,fill=green!20!white] (1,0) arc(0:360:1);
    \filldraw[thick,draw=green!60!black,fill=white] (0.2,0) arc(0:360:0.2);
    \draw (0.6,0) arc(0:360:0.6);
    \filldraw[draw=black,fill=white] (-0.3,0) arc(0:360:0.275);
    \node at (-0.6,0) {\tokenlabel{g \circ f}};
  \end{tikzpicture}
  \ .
\]

If $\cC$ is a $\kk$-linear monoidal category, then $\Tr(\cC)$ is a ring, with multiplication given by
\[
  [f] \cdot [g] = [f \otimes g].
\]
If $\cC$ is strict pivotal and we view elements of the trace as diagrams on the annulus, then this multiplication corresponds to nesting of annuli:
\[
  \begin{tikzpicture}[anchorbase]
    \filldraw[thick,draw=green!60!black,fill=green!20!white] (1,0) arc(0:360:1);
    \filldraw[thick,draw=green!60!black,fill=white] (0.2,0) arc(0:360:0.2);
    \draw (0.6,0) arc(0:360:0.6);
    \filldraw[draw=black,fill=white] (-0.4,0) arc(0:360:0.2);
    \node at (-0.6,0) {\tokenlabel{f}};
  \end{tikzpicture}
  \ \cdot\
  \begin{tikzpicture}[anchorbase]
    \filldraw[thick,draw=green!60!black,fill=green!20!white] (1,0) arc(0:360:1);
    \filldraw[thick,draw=green!60!black,fill=white] (0.2,0) arc(0:360:0.2);
    \draw (0.6,0) arc(0:360:0.6);
    \filldraw[draw=black,fill=white] (-0.4,0) arc(0:360:0.2);
    \node at (-0.6,0) {\tokenlabel{g}};
  \end{tikzpicture}
  \ =\
  \begin{tikzpicture}[anchorbase]
    \filldraw[thick,draw=green!60!black,fill=green!20!white] (1.5,0) arc(0:360:1.5);
    \filldraw[thick,draw=green!60!black,fill=white] (0.2,0) arc(0:360:0.2);
    \draw (0.6,0) arc(0:360:0.6);
    \filldraw[draw=black,fill=white] (-0.4,0) arc(0:360:0.2);
    \node at (-0.6,0) {\tokenlabel{g}};
    \draw (1.1,0) arc(0:360:1.1);
    \filldraw[draw=black,fill=white] (-0.9,0) arc(0:360:0.2);
    \node at (-1.1,0) {\tokenlabel{f}};
  \end{tikzpicture}
\]

We see from the above that the trace gives another method of \emph{decategorification}.  We thus have another corresponding notion of \emph{categorification}.  To categorify a $\kk$-algebra $R$ can mean to find a $\kk$-linear monoidal category $\cC$ such that $\Tr(\cC) \cong R$ as $\kk$-algebras.

To justify the use of the term \emph{trace}, consider the category $\Vect_\kk$ of finite-dimensional vector spaces over a field $\kk$.  Let $V$ be a $\kk$-vector space of finite dimension $n$.  Then we have an isomorphism
\[
  g \colon V \xrightarrow{\cong} \kk^n.
\]
For $1 \le a \le n$, define the inclusion and projection maps
\begin{align*}
  i_a &\colon \kk \to \kk^n,\quad \alpha \mapsto (\underbrace{0,\dotsc,0}_{a-1},\alpha,\underbrace{0,\dotsc,0}_{n-a}), \\
  p_a &\colon \kk^n \to \kk,\quad (\alpha_1,\dotsc,\alpha_n) \mapsto \alpha_a.
\end{align*}
Note that
\[
  p_b \circ i_a = \delta_{a,b} 1_\kk
  \qquad \text{and} \qquad
  \sum_{a=1}^n i_a \circ p_a = 1_{\kk^n}.
\]

Now suppose $f \colon V \to V$ is a linear map.  For $1 \le a, b \le n$, define
\[
  f_{a,b} = p_a \circ g \circ f \circ g^{-1} \circ i_b \colon \kk \to \kk.
\]
Then we have
\[
  f
  = g^{-1} \circ g \circ f \circ g^{-1} \circ g
  = \sum_{a,b=1}^n g^{-1} i_a f_{a,b} p_b g.
\]
Hence, in $\Tr(\Vect_\kk)$, we have
\begin{equation} \label{Tr=trace}
  [f]
  = \sum_{a,b=1}^n \left[ g^{-1} i_a f_{a,b} p_b g \right]
  = \sum_{a,b=1}^n \left[ f_{a,b} p_b g g^{-1} i_a \right]
  = \sum_{a,b=1}^n \left[ f_{a,b} p_b i_a \right]
  = \sum_{a=1}^n \left[ f_{a,a} \right].
\end{equation}
So the class of $[f]$ is equal to the sum of classes of endomorphisms of $\kk$ given by its diagonal components in some basis.  By \cref{Endk}, it follows that we have an isomorphism of rings
\begin{equation} \label{TrVectk}
  \Tr(\Vect_\kk) \cong \kk,\quad
  [f] \mapsto \tr(f).
\end{equation}
In particular, $\Vect_\kk$ is a trace categorification of the field $\kk$.

\subsection{Action of the trace on the center}

Suppose $\cC$ is a strict pivotal $\kk$-linear monoidal category.  We have seen that the trace $\Tr(\cC)$ can be thought of diagrams on the annulus, while the center $\End_\cC(\one)$ can be thought of as closed diagrams.  There is then a natural action of the trace on the center given by placing a closed diagram inside the inner boundary of the annulus, and then viewing the resulting diagram as a closed diagram.  In particular, if $f \in \End_\cC(X)$ and $z \in \End_\cC(\one)$ is a closed diagram, then the action of $[f] \in \Tr(\cC)$ on $z$ is
\[
  \begin{tikzpicture}[anchorbase]
    \filldraw[thick,draw=green!60!black,fill=green!20!white] (1,0) arc(0:360:1);
    \filldraw[thick,draw=green!60!black,fill=white] (0.2,0) arc(0:360:0.2);
    \draw (0.6,0) arc(0:360:0.6);
    \filldraw[draw=black,fill=white] (-0.4,0) arc(0:360:0.2);
    \node at (-0.6,0) {\tokenlabel{f}};
  \end{tikzpicture}
  \ \cdot\ z
  =
  \begin{tikzpicture}[anchorbase]
    \draw (0.6,0) arc(0:360:0.6);
    \filldraw[draw=black,fill=white] (-0.4,0) arc(0:360:0.2);
    \node at (-0.6,0) {\tokenlabel{f}};
    \node at (0,0) {$z$};
  \end{tikzpicture}
  \ .
\]
For example, \cref{lintrace} is the action of $[f]$ on the identity $1_\kk$ (the empty diagram) of the center $\End_\kk(\kk)$ of $\Vect_\kk$.  This explains the connection between \cref{centertrcomkp} and \cref{TrVectk}.

\subsection{The Chern character}

There is a nice relationship between the split Grothendieck group of a category and the trace of that category, as we now explain.  Suppose $\cC$ is an additive $\kk$-linear category.

\begin{lem}[{\cite[Lem.~3.1]{BGHL14}}] \label{tracesum}
  If $f \colon X \to X$ and $g \colon Y \to Y$ are morphisms in $\cC$, then
  \[
    [f \oplus g] = [f] + [g]
  \]
  in $\Tr(\cC)$.
\end{lem}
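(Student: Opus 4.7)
The plan is to decompose $f \oplus g$ using the biproduct structure maps and then apply the trace relation $[a \circ b] = [b \circ a]$ twice. Let $i_X \colon X \to X \oplus Y$, $i_Y \colon Y \to X \oplus Y$, $p_X \colon X \oplus Y \to X$, and $p_Y \colon X \oplus Y \to Y$ be the canonical inclusion and projection morphisms. The biproduct axioms give
\[
  p_X \circ i_X = 1_X, \quad p_Y \circ i_Y = 1_Y, \quad i_X \circ p_X + i_Y \circ p_Y = 1_{X \oplus Y},
\]
and from these one obtains the identity
\[
  f \oplus g = i_X \circ f \circ p_X + i_Y \circ g \circ p_Y
\]
in $\End_\cC(X \oplus Y)$.

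First, I would apply the $\kk$-linearity of the quotient map $\End_\cC(X \oplus Y) \to \Tr(\cC)$ to obtain
\[
  [f \oplus g] = [i_X \circ f \circ p_X] + [i_Y \circ g \circ p_Y].
\]
Next, I would use the defining relation of the trace. For the first summand, set $a = i_X \colon X \to X \oplus Y$ and $b = f \circ p_X \colon X \oplus Y \to X$. Then $a \circ b$ and $b \circ a$ are both defined, so
\[
  [i_X \circ f \circ p_X] = [a \circ b] = [b \circ a] = [f \circ p_X \circ i_X] = [f \circ 1_X] = [f].
\]
An identical argument with $a = i_Y$ and $b = g \circ p_Y$ gives $[i_Y \circ g \circ p_Y] = [g]$.

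There is essentially no obstacle here: the proof is a direct two-step unwinding of definitions. The only subtlety worth flagging is that one must remember to move the inclusion (rather than the projection) across the trace relation, so that the residual composition collapses to $f \circ 1_X$ via the biproduct axiom $p_X \circ i_X = 1_X$ instead of producing a term involving $i_X \circ p_X$, which is merely an idempotent on $X \oplus Y$ and not the identity.
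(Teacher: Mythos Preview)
Your proof is correct and follows essentially the same approach as the paper: decompose $f \oplus g$ as $i_X f p_X + i_Y g p_Y$ (the paper phrases this as $(f \oplus 0) + (0 \oplus g)$ with $f \oplus 0 = i f p$), then cycle using the trace relation so that $p \circ i = 1$ collapses each summand. The only cosmetic difference is that the paper cycles the projection to the left ($[ifp] = [pif]$) whereas you cycle the inclusion to the right; both yield $p_X \circ i_X = 1_X$, so your closing caveat about which morphism to move is overly cautious.
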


\begin{proof}
  We have $f \oplus g = (f \oplus 0) + (0 \oplus g) \colon X \oplus Y \to X \oplus Y$.  Thus
  \[
    [f \oplus g] = [f \oplus 0] + [0 \oplus g].
  \]
  Let
  \[
    i \colon X \to X \oplus Y
    \quad \text{and} \quad
    p \colon X \oplus Y \to X
  \]
  denote the obvious inclusion and projection.  Then
  \[
    [f \oplus 0] = [ifp] = [pif] = [f].
  \]
  Similarly, $[0 \oplus g] = [g]$.  This completes the proof.
\end{proof}

If $X$ and $Y$ are objects of $\cC$, then we have
\[
  1_{X \oplus Y} = 1_X \oplus 1_Y.
\]
Thus, by \cref{tracesum}, we have
\[
  [1_{X \oplus Y}]
  = [1_X \oplus 1_Y]
  = [1_X] + [1_Y].
\]
It follows that we have a well-defined map of abelian groups
\begin{equation} \label{Chern}
  h_\cC \colon K_0(\cC) \to \Tr(\cC),\quad
  h_\cC([X]_{\cong}) = [1_X].
\end{equation}
The map \cref{Chern} is called the \emph{Chern character map}.  If $\cC$ is an additive strict $\kk$-linear monoidal category, then $h_\cC$ is a homomorphism of rings.

In general, the Chern character map may not be injective and it may not be surjective.  See, for example, \cite[Examples~8--10]{BGHL14}.  However, there are some situations when it is an isomorphism.  A $\kk$-linear category $\cC$ is called \emph{semisimple} if it has finite direct sums, idempotents split (i.e.\ $\cC$ has subobjects), and there exist objects $X_i$, $i \in I$, such that $\Hom_\cC(X_i, X_j) = \delta_{i,j} \kk$ (such objects are called simple) and such that for any two objects $V$ and $W$ in $\cC$, the natural composition map
\[
  \bigoplus_{i \in I} \Hom_\cC(V,X_i) \otimes \Hom_\cC(X_i,W) \to \Hom_\cC(V,W)
\]
is an isomorphism.  If $\cC$ is an abelian category that is semisimple in the above sense and has a zero object, then $\cC$ is semisimple in the usual sense (i.e.\ all short exact sequences split).  If $\kk$ is algebraically closed, then the two notions are equivalent for abelian categories.  See \cite[p.~89]{Mug03} for details.

\begin{prop}
  If $\cC$ is a semisimple additive $\kk$-linear category, then the map
  \[
    h_\cC \otimes 1 \colon K_0(\cC) \otimes_\Z \kk \to \Tr(\cC)
  \]
  is an isomorphism.
\end{prop}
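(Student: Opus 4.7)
The plan is to exhibit matching $\kk$-bases on both sides. Since $\cC$ is semisimple and idempotents split, every object $V$ decomposes, uniquely up to reordering, as a finite direct sum $V \cong \bigoplus_{i \in I} X_i^{\oplus n_i(V)}$. Therefore $K_0(\cC)$ is the free abelian group on $\{[X_i]_\cong\}_{i \in I}$, so $K_0(\cC) \otimes_\Z \kk$ is the free $\kk$-module on the same set, and $h_\cC \otimes 1$ sends $[X_i]_\cong \otimes 1 \mapsto [1_{X_i}]$. It therefore suffices to show that $\{[1_{X_i}]\}_{i \in I}$ is a $\kk$-basis of $\Tr(\cC)$.

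For spanning, given $f \in \End_\cC(V)$ I fix inclusions $\iota_k$ and projections $\pi_k$ realizing $V \cong \bigoplus_k X_{j_k}$, so that $\pi_\ell \iota_k = \delta_{k,\ell} 1_{X_{j_k}}$ and $\sum_k \iota_k \pi_k = 1_V$. Writing $f = \sum_{k,\ell} \iota_\ell \pi_\ell f \iota_k \pi_k$ and repeatedly applying the trace relation $[ab]=[ba]$ as in the proof of \cref{tracesum} and the computation \cref{Tr=trace},
\[
  [f] = \sum_{k,\ell} [\iota_\ell (\pi_\ell f \iota_k) \pi_k] = \sum_{k,\ell} [(\pi_\ell f \iota_k)(\pi_k \iota_\ell)] = \sum_k [\pi_k f \iota_k].
\]
Each $\pi_k f \iota_k$ lies in $\End_\cC(X_{j_k}) = \kk \cdot 1_{X_{j_k}}$, so $[f] \in \Span_\kk\{[1_{X_i}] : i \in I\}$.

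For linear independence, for each $i \in I$ I construct a $\kk$-linear functional $\tau_i \colon \bigoplus_V \End_\cC(V) \to \kk$ descending to $\Tr(\cC)$ and satisfying $\tau_i([1_{X_j}]) = \delta_{i,j}$. Concretely, decompose $V \cong \bigoplus_j X_j^{\oplus n_j(V)}$, extract the $i$-isotypic block $f_i \in \End_\cC(X_i^{\oplus n_i(V)}) \cong M_{n_i(V)}(\kk)$ of $f$ via the corresponding inclusions and projections, and set $\tau_i(f) := \tr(f_i)$. To verify that $\tau_i$ kills $fg - gf$ for $f \colon V \to W$ and $g \colon W \to V$, one uses that $\Hom_\cC(X_j, X_k) = \delta_{j,k} \kk$ forces both $f$ and $g$ to split as direct sums over isotypic components; this reduces $\tau_i(fg) = \tau_i(gf)$ to the standard matrix identity $\tr(AB) = \tr(BA)$ for rectangular matrices over $\kk$ whose products are square. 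Existence of the $\tau_i$ then yields $\kk$-independence of $\{[1_{X_i}]\}$, and combined with the previous step completes the proof.

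The main obstacle is verifying well-definedness of the $\tau_i$, i.e., that each annihilates the commutator subspace defining $\Tr(\cC)$; everything else is a direct consequence of the semisimple structure, with the spanning manipulation essentially identical to the argument leading to \cref{Tr=trace} in the excerpt. I do not expect an obstruction in choosing the decomposition of $V$, since any two such decompositions are related by isomorphisms between isotypic blocks, and $\tr$ is invariant under conjugation.
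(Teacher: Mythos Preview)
Your proof is correct and follows essentially the same strategy as the paper: both show that $\{[1_{X_i}]\}_{i\in I}$ is a $\kk$-basis of $\Tr(\cC)$, with the spanning step carried out by the same cyclic-trace manipulation. The only real difference is that the paper is terse on linear independence, simply asserting $\Tr(\cC)=\bigoplus_i \kk[1_{X_i}]$ after noting $\Hom_\cC(X_i,X_j)=\delta_{i,j}\kk$, whereas you make this explicit by constructing the separating functionals $\tau_i$; your version is thus a fleshed-out form of the paper's argument rather than a genuinely different route.
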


\begin{proof}
  Choose representatives $X_i$, $i \in I$, of the isomorphism classes of the simple objects in $\cC$.  As explained in \cite[p.~89]{Mug03}, every object in $\cC$ is a finite direct sum of the $X_i$.  Thus $K_0(\cC) \cong \bigoplus_{i \in I} \Z [X_i]_{\cong}$.  Suppose $Y$ is an object of $\cC$.  By assumption, there exist $\beta_{i,j} \in \Hom_\cC(Y,X_i)$ and $\alpha_{i,j} \in \Hom_\cC(X_i,Y)$ such that $1_Y = \sum_{i \in I} \sum_j \alpha_{i,j} \beta_{i,j}$.  Thus
  \[
    [1_Y] = \sum_{i \in I} \sum_j [\alpha_{i,j} \beta_{i,j}]
    = \sum_{i \in I} \sum_j [\beta_{i,j} \alpha_{i,j}]
    \in \sum_{i \in I} [\End_\cC(X_i)].
  \]
  Since $\Hom_\cC(X_i, X_j) = \delta_{i,j} \kk$, it follows that $\Tr(\cC) = \bigoplus_{i \in I} \kk [1_{X_i}]$.  It follows that $h_\cC \otimes 1$ is an isomorphism.
\end{proof}

There are other conditions on a category that imply the Chern character map is injective; see, for example, \cite[Prop.~2.5]{BHLW17}.

\subsection{Idempotent completions}

Suppose $R$ is a ring.  Throughout this subsection we will assume all $R$-modules are finitely-generated left modules.  The category of free $R$-modules is quite easy to work with, since all such modules are isomorphic to $R^n$ for some $n \in \N$.  However, we often want to work with the larger category consisting of \emph{projective} $R$-modules.  Fortunately, there is a natural relationship between these two categories.

Recall that an $R$-module is projective if and only if it is a direct summand of a free module.  In other words, an $R$-module $M$ is projective if and only if there exists another $R$-module $N$ and $n \in \N$ such that
\begin{equation} \label{projsum}
  M \oplus N \cong R^n
  \text{ as $R$-modules.}
\end{equation}
Now, given the isomorphism \cref{projsum}, let $p \colon R^n \twoheadrightarrow M$ denote the projection onto $M$, and let $i \colon M \hookrightarrow R^n$ denote the inclusion of $M$ into $R$.  Then
\[
  i \circ p \in \End_R(R^n)
\]
is an idempotent endomorphism of $R^n$ that corresponds to projection onto a submodule of $R^n$ isomorphic to $M$.

Conversely, if $e \in \End_R(R^n)$ is an idempotent (that is, $e^2=e$), then we have
\[
  R^n = e R^n \oplus (1-e) R^n.
\]
So the image $e R^n$ of $e$ is a projective $R$-module.

We see from the above that projective $R$-modules are precisely the images of idempotent morphisms of free $R$-modules.  So if we start with the category of free $R$-modules, we can enlarge this to the category of projective $R$-modules by adding objects corresponding to the images of idempotents.  This motivates the following definition.

\begin{defin}[Idempotent completion]
  The \emph{idempotent completion} (or \emph{Karoubi envelope}) of a category $\cC$ is the category $\Kar(\cC)$ whose objects are pairs of the form $(X,e)$, where $X$ is an object of $\cC$ and $e \colon X \to X$ is an idempotent in $\cC$, and whose morphisms are triples
  \[
    (e,f,e') \colon (A,e) \to (A',e'),
  \]
  where $f \colon A \to A'$ is a morphism of $\cC$ such that $f = e' \circ f \circ e$.
\end{defin}

One should think of the idempotent completion as a way of formally adding in images of idempotents, where $(A,e)$ is thought of as the image of the idempotent $e$.  The original category $\cC$ embeds fully and faithfully into $\Kar(\cC)$ by mapping an object $A$ of $\cC$ to $(A,1_A)$.  The idempotent completion of the category of free $R$-modules is equivalent to the category of projective $R$-modules.

\section{Heisenberg categories\label{sec:Heiscat}}

We conclude with some examples of categories, defined using the concepts introduced above, that are the focus of current research.

\subsection{Categorification of symmetric functions}

Recall the strict $\kk$-linear monoidal category $\cS$ from \cref{Scat}.  We assume in this subsection that $\kk$ is a field of characteristic zero.  The objects of $\cS$ are precisely $\uparrow^{\otimes n}$, $n=0,1,2,
\dotsc$, where $\uparrow^{\otimes 0} := \one$.  Note that all of our generating morphisms are endomorphisms, that is, their domain and codomain are equal.  Hence
\[
  \Hom_\cS(\uparrow^{\otimes n}, \uparrow^{\otimes m}) = 0
  \quad \text{for } m \ne n.
\]
In particular, $\uparrow^{\otimes n}$ is not isomorphic to $\uparrow^{\otimes m}$ for $m \ne n$.  It follows that
\[
  K_0(\Add(\cS)) \cong \Z[x],\quad
  [\uparrow]_\cong \mapsto x,
\]
is an isomorphism of rings.

Now let's consider the idempotent completion $\Kar(\cS)$.  To describe all the objects of $\Kar(\cS)$, we need to know all the idempotents of
\[
  \End_\cS(\uparrow^{\otimes n}) \cong \kk S_n
  \quad \text{for } n=0,1,2,3,\dotsc.
\]
Fortunately, the idempotents in the algebra $\kk S_n$ are well known.  For each partition $\lambda$ of $n$, we have the corresponding \emph{Young idempotent}
\[
  e_\lambda \in \kk S_n.
\]
For example, the Young idempotents for the partitions $(n)$ and $(1^n)$ are the complete symmetrizer and antisymmetrizer:
\[
  e_{(n)} = \frac{1}{n!} \sum_{\pi \in S_n} \pi,
  \qquad
  e_{(1^n)} = \frac{1}{n!} \sum_{\pi \in S_n} (-1)^{\ell(\pi)} \pi,
\]
where $\ell(n)$ is the length of the permutation $\pi \in S_n$.

It follows that the indecomposable objects in $\Kar(\cS)$ are, up to isomorphism,
\[
  (\uparrow^{\otimes n}, e_\lambda),
  \quad n=0,1,2,\dotsc,\ \lambda \vdash n.
\]
One can show that
\[
  K_0(\Kar(\Add(\cS))) \cong \Sym,
\]
the ring of symmetric functions.  The isomorphism is given explicitly by
\[
  \left[ (\uparrow^{\otimes n}, e_\lambda) \right]_\cong
  \mapsto s_\lambda,
\]
where $s_\lambda$ is the \emph{Schur function} corresponding to the partition $\lambda \vdash n$.

\subsection{Base category\label{base}}

Recall the strict $\kk$-linear monoidal category $\AH^\dg$ from \cref{sec:dAHA}.  This category has one generating object $\uparrow$, generating morhpisms
\[
  \begin{tikzpicture}[anchorbase]
    \draw[->] (-0.25,-0.25) to (0.25,0.25);
    \draw[->] (0.25,-0.25) to (-0.25,0.25);
  \end{tikzpicture}
  \ \colon \uparrow \otimes \uparrow \ \to\ \uparrow \otimes \uparrow
  \qquad \text{and} \qquad
  \begin{tikzpicture}[anchorbase]
    \draw[->] (0,0) to (0,0.6);
    \redcircle{(0,0.3)};
  \end{tikzpicture}
  \ \colon \uparrow\ \to\ \uparrow,
\]
and relations
\[
  \begin{tikzpicture}[anchorbase]
    \draw[->] (0.3,0) to[out=up,in=down] (-0.3,0.6) to[out=up,in=down] (0.3,1.2);
    \draw[->] (-0.3,0) to[out=up,in=down] (0.3,0.6) to[out=up,in=down] (-0.3,1.2);
  \end{tikzpicture}
  \ =\
  \begin{tikzpicture}[anchorbase]
    \draw[->] (-0.2,0) -- (-0.2,1.2);
    \draw[->] (0.2,0) -- (0.2,1.2);
  \end{tikzpicture}
  \ ,\qquad
  \begin{tikzpicture}[anchorbase]
    \draw[->] (0.4,0) -- (-0.4,1.2);
    \draw[->] (0,0) to[out=up, in=down] (-0.4,0.6) to[out=up,in=down] (0,1.2);
    \draw[->] (-0.4,0) -- (0.4,1.2);
  \end{tikzpicture}
  \ =\
  \begin{tikzpicture}[anchorbase]
    \draw[->] (0.4,0) -- (-0.4,1.2);
    \draw[->] (0,0) to[out=up, in=down] (0.4,0.6) to[out=up,in=down] (0,1.2);
    \draw[->] (-0.4,0) -- (0.4,1.2);
  \end{tikzpicture}
  \ , \qquad \text{and} \qquad
  \begin{tikzpicture}[anchorbase]
    \draw[->] (0,0) -- (0.6,0.6);
    \draw[->] (0.6,0) -- (0,0.6);
    \redcircle{(0.15,.45)};
  \end{tikzpicture}
  \ -\
  \begin{tikzpicture}[anchorbase]
    \draw[->] (0,0) -- (0.6,0.6);
    \draw[->] (0.6,0) -- (0,0.6);
    \redcircle{(.45,.15)};
  \end{tikzpicture}
  \ =\
  \begin{tikzpicture}[anchorbase]
    \draw[->] (0,0) -- (0,0.6);
    \draw[->] (0.3,0) -- (0.3,0.6);
  \end{tikzpicture}\ .
\]

Let's add another generating object $\downarrow$ that is right dual to $\uparrow$.  As noted in \cref{duality} this means that we have morphisms
\[
  \begin{tikzpicture}[anchorbase]
    \draw[->] (-0.3,0) to[out=down,in=down,looseness=2] (0.3,0);
  \end{tikzpicture}
  \ \colon \one \to \downarrow \otimes \uparrow
  \qquad \text{and} \qquad
  \begin{tikzpicture}[anchorbase]
    \draw[->] (-0.3,0) to[out=up,in=up,looseness=2] (0.3,0);
  \end{tikzpicture}
  \ \colon \uparrow \otimes \downarrow \to \one
\]
such that
\[
  \begin{tikzpicture}[anchorbase]
    \draw[<-] (0.6,0) to (0.6,0.5) to[out=up,in=up,looseness=2] (0.3,0.5) to[out=down,in=down,looseness=2] (0,0.5) to (0,1);
  \end{tikzpicture}
  \ =\
  \begin{tikzpicture}[anchorbase]
    \draw[<-] (0,0) to (0,1);
  \end{tikzpicture}
  \qquad \text{and} \qquad
  \begin{tikzpicture}[anchorbase]
    \draw[<-] (0.6,1) to (0.6,0.5) to[out=down,in=down,looseness=2] (0.3,0.5) to[out=up,in=up,looseness=2] (0,0.5) to (0,0);
  \end{tikzpicture}
  \ =\
  \begin{tikzpicture}[anchorbase]
    \draw[->] (0,0) to (0,1);
  \end{tikzpicture}
  \ .
\]

Let's define a right crossing by
\begin{equation} \label{rcross}
  \begin{tikzpicture}[anchorbase]
    \draw[->] (0,0) -- (0.6,0.6);
    \draw[<-] (0.6,0) -- (0,0.6);
  \end{tikzpicture}
  \ :=\
  \begin{tikzpicture}[anchorbase,scale=0.6]
    \draw[->] (0.3,0) -- (-0.3,1);
    \draw[->] (-0.75,1) -- (-0.75,0.5) .. controls (-0.75,0.2) and (-0.5,0) .. (0,0.5) .. controls (0.5,1) and (0.75,0.8) .. (0.75,0.5) -- (0.75,0);
  \end{tikzpicture}
\end{equation}

We will now define various categories by imposing one additional relation involving this right crossing.

\subsection{Affine oriented Brauer category}

Suppose that, in addition to the above generating objects, morphisms, and relations, we impose the additional relation that the right crossing \cref{rcross} is invertible.  This means that is has a two-sided inverse, which we will denote
\begin{equation} \label{lcross}
  \begin{tikzpicture}[anchorbase]
    \draw[<-] (0,0) -- (0.6,0.6);
    \draw[->] (0.6,0) -- (0,0.6);
  \end{tikzpicture}
  \ .
\end{equation}

The assertion that \cref{rcross,lcross} are two-sided inverses is precisely the statement that
\[
  \begin{tikzpicture}[anchorbase]
    \draw[->] (0.3,0) to[out=up,in=down] (-0.3,0.6) to[out=up,in=down] (0.3,1.2);
    \draw[<-] (-0.3,0) to[out=up,in=down] (0.3,0.6) to[out=up,in=down] (-0.3,1.2);
  \end{tikzpicture}
  \ =\
  \begin{tikzpicture}[anchorbase]
    \draw[<-] (-0.2,0) -- (-0.2,1.2);
    \draw[->] (0.2,0) -- (0.2,1.2);
  \end{tikzpicture}
  \qquad \text{and} \qquad
  \begin{tikzpicture}[anchorbase]
    \draw[<-] (0.3,0) to[out=up,in=down] (-0.3,0.6) to[out=up,in=down] (0.3,1.2);
    \draw[->] (-0.3,0) to[out=up,in=down] (0.3,0.6) to[out=up,in=down] (-0.3,1.2);
  \end{tikzpicture}
  \ =\
  \begin{tikzpicture}[anchorbase]
    \draw[->] (-0.2,0) -- (-0.2,1.2);
    \draw[<-] (0.2,0) -- (0.2,1.2);
  \end{tikzpicture}
  \ .
\]
Up to reflecting diagrams in a vertical axis, the resulting category $\mathcal{AOB}$ is the \emph{affine oriented Brauer category} defined in \cite{BCNR17}.  One can show that it is strict pivotal (see \cite[Th.~1.3]{Bru17}).  The left cups and caps are defined by
\[
  \begin{tikzpicture}[anchorbase]
    \draw[<-] (-0.3,0) to[out=down,in=down,looseness=2] (0.3,0);
  \end{tikzpicture}
  \ :=\
  \begin{tikzpicture}[anchorbase]
    \draw[<-] (-0.3,0.3) to[out=down,in=up] (0.3,-0.3) to[out=down,in=down,looseness=1.5] (-0.3,-0.3) to[out=up,in=down] (0.3,0.3);
  \end{tikzpicture}
  \qquad \text{and} \qquad
  \begin{tikzpicture}[anchorbase]
    \draw[<-] (-0.3,0) to[out=up,in=up,looseness=2] (0.3,0);
  \end{tikzpicture}
  \ :=\
  \begin{tikzpicture}[anchorbase]
    \draw[<-] (-0.3,-0.3) to[out=up,in=down] (0.3,0.3) to[out=up,in=up,looseness=1.5] (-0.3,0.3) to[out=down,in=up] (0.3,-0.3);
  \end{tikzpicture}
  \ .
\]
Omitting the dot generator yields the \emph{oriented Brauer category} $\mathcal{OB}$, which is the free symmetric monoidal category on a pair of dual objects.  It is obtained from the strict $\kk$-linear monoidal category $\cS$ of \cref{Scat} by adding a right dual object and inverting the right crossing as in \cref{base}.

The categories $\mathcal{OB}$ and $\mathcal{AOB}$ encode much of the representation theory of $\fgl_n(\kk)$.  Let $\fgl_n(\kk)$-mod denote the monoidal category of $\fgl_n(\kk)$-modules, and let $\End(\fgl_n(\kk)\md)$ be the monoidal category of endofunctors of $\fgl_n(\kk)$-mod.  Objects are functors $\fgl_n(\kk)\md \to \fgl_n(\kk)\md$, and morphisms are natural transformations.  For functors $F,G,F',G'$ and natural transformations $\eta \colon F \to F'$ and $\xi \colon G \to G'$, we define the tensor product by
\[
  F \otimes G := F \circ G,\qquad
  \eta \otimes \xi := \eta \xi \colon F \circ G \to F' \circ G'.
\]

Let $V$ be the natural $n$-dimensional representation of $\fgl_n(\kk)$, with dual representation $V^*$.  We have a monoidal functor
\begin{equation}\label{OB-functor}
  \mathcal{OB} \to \fgl_n(\kk)\md
\end{equation}
defined as follows.  On objects,
\[
  \uparrow\ \mapsto V,\qquad
  \downarrow\ \mapsto V^*,
\]
and, on morphisms,
\begin{align*}
  \begin{tikzpicture}[anchorbase]
    \draw[->] (-0.3,0) to[out=down,in=down,looseness=2] (0.3,0);
  \end{tikzpicture}
  \ &\mapsto
  \Big( \kk \to V^* \otimes V,\quad a \mapsto a \sum_{v \in B} \delta_v \otimes v \Big),
  \\
  \begin{tikzpicture}[anchorbase]
    \draw[->] (-0.3,0) to[out=up,in=up,looseness=2] (0.3,0);
  \end{tikzpicture}
  \ &\mapsto
  \Big( V \otimes V^* \to \kk,\quad v \otimes f \mapsto f(v) \Big),
  \\
  \begin{tikzpicture}[anchorbase]
    \draw[->] (-0.25,-0.25) to (0.25,0.25);
    \draw[->] (0.25,-0.25) to (-0.25,0.25);
  \end{tikzpicture}
  \ &\mapsto
  \Big( V \otimes V \to V \otimes V,\quad u \otimes v \mapsto v \otimes u \Big),
\end{align*}
where $B$ is a basis of $V$, and $\{\delta_v : v \in B\}$ is the dual basis.

Now, we have a natural monoidal functor
\[
  \fgl_n(\kk)\md \to \End(\fgl_n(\kk)\md),\quad M \mapsto M \otimes -,\quad f \mapsto f \otimes 1.
\]
Composition with \cref{OB-functor} yields a monoidal functor.
\[
  \mathcal{OB} \to \End(\fgl_n(\kk)\md).
\]
This can be extended to a monoidal functor
\[
  \mathcal{AOB} \to \End(\fgl_n(\kk)\md)
\]
by defining
\[
  \begin{tikzpicture}[anchorbase]
    \draw[->] (0,0) to (0,0.6);
    \redcircle{(0,0.3)};
  \end{tikzpicture}
  \ \mapsto
  \left( V \otimes - \to V \otimes -,\quad v \otimes w \mapsto \sum_{i,j=1}^n e_{i,j} v \otimes e_{j,i} w \right),
\]
where $e_{i,j}$ is the matrix with a $1$ in the $(i,j)$ position and a $0$ in all other positions.  This functor sends the center of $\mathcal{AOB}$ to $\End(\Id)$, which can be identified with the center of $U(\fgl_n(\kk))$.

\subsection{Heisenberg categories}

Fix $k \in \Z_{<0}$.  Let's return to the category of \cref{base}, but now impose the relation that the following matrix is an isomorphism in the additive envelope:
\begin{equation} \label{invrel}
  \begin{bmatrix}
    \begin{tikzpicture}[anchorbase]
      \draw[->] (0,0) -- (0.6,0.6);
      \draw[<-] (0.6,0) -- (0,0.6);
    \end{tikzpicture}
    &
    \begin{tikzpicture}[anchorbase]
      \draw[->] (-0.3,0.2) to (-0.3,0) to[out=down,in=down,looseness=2] (0.3,0) to (0.3,0.2);
    \end{tikzpicture}
    &
    \begin{tikzpicture}[anchorbase]
      \draw[->] (-0.3,0.2) to (-0.3,0) to[out=down,in=down,looseness=2] (0.3,0) to (0.3,0.2);
      \redcircle{(0.3,0)};
    \end{tikzpicture}
    &
    \cdots
    &
    \begin{tikzpicture}[anchorbase]
      \draw[->] (-0.3,0.2) to (-0.3,0) to[out=down,in=down,looseness=2] (0.3,0) to (0.3,0.2);
      \redcircle{(0.3,0)} node[anchor=west,color=black] {\dotlabel{-k-1}};
    \end{tikzpicture}
  \end{bmatrix}
  \colon (\uparrow \otimes \downarrow) \oplus \one^{\oplus (-k)} \to \downarrow \oplus \uparrow.
\end{equation}
We denote the resulting category by $\Heis_k$.

The inversion relation imposed above means that there is some $k \times 1$ matrix of morphisms in $\Heis_k$ that is a two-sided inverse to \cref{invrel}.  A thorough analysis of this category involves introducing notation for the entries of this inverse matrix and then deducing a simplified presentation of the category using the relations that arise from our two matrices being two-sided inverses to each other.  This is done in \cite{Bru17}, where it is shown that this category is strict pivotal and isomorphic to the \emph{Heisenberg category} introduced in \cite{MS17}.  In the case $k=-1$, this category was originally defined by Khovanov in \cite{Kho14}.  In \cite{MS17,Kho14}, the category was defined in terms of planar diagrams up to isotopy, and so was strict pivotal by definition.  The Heisenberg category encodes much of the representation theory of the symmetric group (when $k=-1$) and other degenerate cyclotomic Hecke algebras.  The affine oriented Brauer category can be thought of the $k=0$ version of the Heisenberg category.

Let us now explain why this category is called the \emph{Heisenberg} category.  The infinite-dimensional Heisenberg Lie algebra $\fh$ is the Lie algebra with generators
\[
  p_n^\pm,\ n=\Z_{>0,}
  \quad \text{and} \quad
  c,
\]
and relations
\[
  [p_n^+,p_m^+] = [p_n^-,p_m^-] = [c,p_n^\pm] = 0,\quad
  [p_n^+,p_m^-] = \delta_{n,m} n c,\qquad
  n,m \in \Z_{>0}.
\]
Since the element $c$ is central, it acts as a constant on any irreducible representation.  If we fix a \emph{central charge} $k \in \Z$, we can consider the associative algebra $U(\fh)/\langle c - k \rangle$, where $U(\fh)$ is the universal enveloping algebra of $\fh$.  Representations of $U(\fh)/\langle c - k \rangle$ are equivalent to representations of $\fh$ on which the central element $c$ acts as multiplication by $k$.  In particular, in $U(\fh)/\langle c - k \rangle$ we have
\begin{equation} \label{p1commute}
  p_1^+ p_1^- - k = p_1^- p_1^+.
\end{equation}

Now, the inversion relation \cref{invrel} implies that, in the Grothendieck group of the additive envelope of $\Heis_k$, we have
\begin{equation} \label{arrowcommute}
  [\uparrow]_\cong [\downarrow]_\cong + (-k) [\one]_\cong = [\downarrow]_\cong [\uparrow]_\cong.
\end{equation}
We see that \cref{p1commute,arrowcommute} are the same relation after replacing $p_1^+ \leftrightarrow [\uparrow]_\cong$, $p_1^- \leftrightarrow [\downarrow]_\cong$, $1 \leftrightarrow [\one]_\cong$.  In fact, if $\kk$ is a field of characteristic zero, then we have an isomorphism of algebras
\begin{equation} \label{Hisom}
  U(\fh)/\langle c - k \rangle \cong K_0(\Kar(\Add(\Heis_k))).
\end{equation}
Injectivity was proved in \cite[Th.~1]{Kho14} in the case $k=-1$ and in \cite[Th.~4.4]{MS17} in the general case $k < 0$.  It was conjectured in \cite[Conj.~1]{Kho14} and \cite[Conj.~4.5]{MS17} that \cref{Hisom} is an isomorphism.  This conjecture was recently proved in \cite[Th.~1.1]{BSW1}.  Earlier, an analogue of the conjecture was proved when one enlarges the Heisenberg category by adding in additional generating morphisms corresponding to elements of a graded Frobenius algebra with nontrivial grading.  See \cite[Th.~1]{CL12}, \cite[Th.~10.5]{RS17}, and \cite[Th.~1.5]{Sav18}.  The trace of the Heisenberg category has also been related to $W$-algebras in \cite{CLLS15}.

One can also define a \emph{quantum Heisenberg category} by replacing the symmetric group relations by the Hecke algebra relations \cref{braid-inv,braid,skein}.  See \cite{LS13,BSW2}.  The resulting category also categorifies the Heisenberg algebra.  Its trace has been related to elliptic Hall algebras in \cite{CLLSS16}.  The quantum analogue of the oriented Brauer category is the \emph{HOMFLY-PT skein category} introduced by Turaev in \cite[\S5.2]{Tur89}, where it was called the \emph{Hecke category}.  See also \cite{Bru17b,BSW2}.


\bibliographystyle{alphaurl}
\bibliography{biblist}

\end{document}